\documentclass[11pt,a4paper]{article}

\usepackage[T1]{fontenc}
\usepackage[utf8]{inputenc}
\usepackage{lmodern}
\usepackage{amsmath,amssymb,amsthm,mathtools,cite}
\usepackage{enumitem}
\usepackage{tikz}
\usetikzlibrary{positioning,fit,calc,arrows.meta,backgrounds,shapes.geometric}
\usepackage{microtype}
\usepackage{fullpage}
\usepackage{authblk}

\newtheorem{theorem}{Theorem}[section]
\newtheorem{corollary}[theorem]{Corollary}

\newtheorem{lemma}[theorem]{Lemma}
\theoremstyle{definition}

\newtheorem{question}[theorem]{Question}
\theoremstyle{remark}

\DeclareMathOperator{\ccl}{ccl}
\newcommand{\N}{\mathbb N}
\newcommand{\eps}{\varepsilon}
\allowdisplaybreaks

\begin{document}

\title{\bf Large Complete Minors from a Cheeger Condition}

\author[1]{Chengli Li\thanks{Email: lichengli@m.scnu.edu.cn.}}
\author[2]{Leyou Xu\thanks{Email: leyouxu@m.scnu.edu.cn.}}
\author[1]{Bo Zhou\thanks{Email: zhoubo@m.scnu.edu.cn.}}
\affil[1]{\footnotesize School of Mathematical Sciences, South China Normal University, Guangzhou 510631, P.R. China}
\affil[2]{\footnotesize School of Computer Science, South China Normal University, Guangzhou 510631, P.R. China}
\date{}
\maketitle

\begin{abstract}
Let $G$ be a graph on $n$ vertices, and let
$e_G(S,V(G)\setminus S)$ be the number of edges with exactly one
endpoint in $S$. The Cheeger constant and the restricted Cheeger
constant of $G$, where $k\ge1$ is real, are, respectively,
\[
 h(G)=\min_{\substack{\emptyset\ne S\subseteq V(G)\\|S|\le \frac{n}{2}}}
 \frac{e_G(S,V(G)\setminus S)}{|S|}
  \text{ and } 
 h_k(G)=\min_{\substack{\emptyset\ne S\subseteq V(G)\\
 |S|\le\min\{k,\frac{n}{2}\}}}
 \frac{e_G(S,V(G)\setminus S)}{|S|}.
\]
The contraction clique number $\ccl(G)$ is the largest integer $r$
such that $G$ contains the complete graph $K_r$ as a minor.
Krivelevich and Nenadov [Complete minors in graphs without sparse
cuts, Int. Math. Res. Not. IMRN 12 (2021) 8996--9015] proved
that, for every fixed $\eps>0$ and all sufficiently large $n$ and $d$, if $G$ is a graph on $n$ vertices with maximum degree at most $d$, then $h(G)\ge\eps d$ and
$h_{\eps n}(G)\ge(\frac{1}{2}+\eps)d$ imply
$\ccl(G)=\Omega_\eps(\sqrt{\frac{nd}{\log d}})$. They asked to determine if one can guarantee the same lower bound on $\ccl(G)$ without the additional
condition on $h_{\eps n}(G)$. They showed that this is the case when $d$ is a constant. We answer this question
affirmatively. For every $\eps>0$, there are constants
$\beta=\beta(\eps)>0$ and $n_0=n_0(\eps)$ such that, whenever
$d\ge 3$ is an integer, for every graph $G$ with
$n\ge n_0$ vertices and maximum degree at most $d$, if $h(G)\ge\eps d$, then
$\ccl(G)\ge\beta\sqrt{\frac{nd}{\log d}}$. The dependence of this lower bound on $n$ and $d$ is best possible up to a constant factor. As a corollary, a lower bound is derived for the contraction clique number of $d$-regular graphs for which the second largest eigenvalue is bounded away from $d$, compared to earlier $\frac{d}{2}$. 
The proof combines spectral properties of graphs with an analysis of lazy random walks.

\smallskip
\noindent\textbf{Keywords:}  contraction clique number, Cheeger constant, restricted Cheeger constant, branch sets, hitting sets

\noindent\textbf{2020 Mathematics Subject Classification:} 05C83, 05C50, 05C48, 05C81
\end{abstract}

\section{Introduction}

All graphs in this paper are finite, simple and undirected. All
logarithms are natural. 
A graph $H$ is a \emph{minor} of a graph $G$, written
$H\prec G$, if $H$ can be obtained from $G$ by deleting
vertices, deleting edges, and contracting edges. Contracting an edge
means identifying its endpoints, deleting every resulting loop, and
replacing parallel edges by a single edge. Equivalently,
$H\prec G$ if there is a family
$(V_x)_{x\in V(H)}$ of disjoint nonempty subsets of $V(G)$
such that $V_x$ induces a  connected subgraph for every $x\in V(H)$,  and whenever
$xy\in E(H)$, $G$ has an edge between $V_x$ and  $V_y$. These sets are the \emph{branch sets} of $G$ with respect to $H$.
The \emph{contraction clique number} 
%(also known as \emph{Hadwiger number}) 
of $G$ is defined as 
$\ccl(G)=\max\{t\in\N\mid K_t\prec G\}$.

Minors have natural importance in modern graph theory and a rich body of results and conjectures in the literature provides sufficient conditions for the existence of large complete minors.  
For example, Hadwiger \cite{Hadwiger} conjectured in 1943   that for any graph $G$, $\chi(G)\le \ccl(G)$, where $\chi(G)$ is the chromatic number of $G$. This conjecture is known when $\chi(G)\le6$: Wagner \cite{Wagner} and  Robertson, Sanders, Seymour and Thomas\cite{RSST}  connected the case $\chi(G)=5$ with the Four Colour Theorem; 
Robertson, Seymour and Thomas \cite{RST} established the case $\chi(G)=6$. For general $t$, Norin, Postle and Song \cite{NPS} obtained the first asymptotic improvement over the colouring bound supplied by the Kostochka--Thomason density theorem, proving that every $K_t$-minor-free graph is $O(t(\log t)^a)$-colourable for every $a>\frac14$. Delcourt and Postle \cite{DP} subsequently improved this to $O(t\log\log t)$ colours and reduced the linear form of Hadwiger's conjecture to the corresponding problem for suitably small graphs. 
Another example
asks which (edge) density or expansion hypotheses force a large complete minor. Mader \cite{Mader} initiated the systematic study of the extremal function for complete minors. Kostochka~\cite{Kostochka} and Thomason~\cite{Thomason1984} proved that every graph of average degree $D$ contains a complete minor of order $\Omega(D/\sqrt{\log D})$. Later, Thomason ~\cite{Thomason2001}  determined the asymptotic extremal constant and showed that random graphs provide extremal examples. Recently, Alon, Krivelevich and Sudakov ~\cite{AKS}  gave a short proof of the order of magnitude. Stronger conclusions can be obtained when sparse cuts are excluded for random graphs and random regular graphs~\cite{BCE,FKO08,FKO09}, graphs of large girth~\cite{KuOs,DR}, graphs with forbidden subgraphs~\cite{BFS}, and random subgraphs of graphs with large minimum degree~\cite{EKK}. Krivelevich and Sudakov \cite{KS} obtained large complete minors from vertex-expansion assumptions.

For a graph $G$ and disjoint sets $A,B\subseteq V(G)$, let $e_G(A,B)$ be the number of edges of $G$ with one endpoint in $A$ and the other in $B$. For every nonempty $S\subseteq V(G)$, the \emph{edge expansion} of $S$ is defined as $h_G(S)=\frac{e_G(S,V(G)\setminus S)}{|S|}$. The \emph{Cheeger constant} of $G$ is
\[
  h(G)=\min\left\{h_G(S)\,\middle|\,\emptyset\ne S\subseteq V(G),\ |S|\le\frac{|V(G)|}{2}\right\}.
\]
If $h(G)>0$, then $G$ is connected. For every real number $k\ge1$, the \emph{restricted Cheeger constant} is \cite{KN}
\[
  h_k(G)=\min\left\{h_G(S)\,\middle|\,\emptyset\ne S\subseteq V(G),\ |S|\le\min\left\{k,\frac{|V(G)|}{2}\right\}\right\}.
\]
Thus $h_k(G)=h(G)$ when $k\ge \frac{|V(G)|}{2}$.

Krivelevich and Nenadov \cite{KN} established the following two theorems.

\begin{theorem} 
[Krivelevich--Nenadov~{\cite[Theorem~1.1]{KN}}]
\label{thm:KN-strong}
For every $\eps>0$, there are constants $\beta>0$ and
$n_0,d_0\in\N$ such that the following holds for all $n\ge n_0$ and
$d\ge d_0$. Let $G$ be a graph on $n$ vertices with maximum degree
at most $d$. If $h(G)\ge\eps d$ and
$h_{\eps n}(G)\ge(\frac{1}{2}+\eps)d$, then
$\ccl(G)\ge\beta\sqrt{\frac{nd}{\log d}}$.
\end{theorem}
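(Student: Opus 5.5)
We would prove Theorem~\ref{thm:KN-strong} by building the $K_r$-minor greedily, with $r=\beta\sqrt{nd/\log d}$. Each branch set will be a connected set of size about $m=C\sqrt{n\log d/d}$. The total volume used is then $rm=\Theta(n\beta C)$, which is at most a small fraction $\eta n$ of the vertices if $\beta$ is small. Branch sets are added one at a time. Suppose $V_1,\dots,V_t$ are already built, and let $U=\bigcup_i V_i$, so $|U|\le\eta n$ with $\eta\ll\eps$. The new set $V_{t+1}$ must be connected in $G-U$ and must touch (have an edge to) every $V_i$.

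\textbf{Step 1 (robust expansion after deleting $U$).} Since $G$ has maximum degree at most $d$, removing $U$ destroys at most $d|U|$ edges. Consider a set $S\subseteq V\setminus U$ with $|S|\le\eps n$. The hypothesis $h_{\eps n}(G)\ge(\frac12+\eps)d$ gives at least $(\frac12+\eps)d|S|$ edges leaving $S$. At most $e(S,U)$ of these go into $U$. So as long as $e(S,U)$ is not too large, $S$ still expands in $G-U$.

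To guarantee this, we maintain the invariant that no vertex outside $U$ sends more than $(\frac12-\eps/2)d$ of its edges into $U$. Vertices violating it are moved into a "garbage" set that is also deleted. A standard averaging argument shows the garbage has size $O(|U|/\eps)$. Combining this with $h(G)\ge\eps d$ for large sets, $G-U-\text{garbage}$ has a giant component $W$ of size at least $(1-O(\eta/\eps))n$. Inside $W$, every set of size at most $\eps n$ expands by a factor of roughly $\eps d/2$, and larger sets expand by at least $\eps d/2$ up to lower-order losses.

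\textbf{Step 2 (small-diameter connections hitting many branch sets).} Expansion with ratio $\Omega(\eps d)$ for small sets makes balls in $W$ grow like $(\eps d)^{j}$. This gives diameter $O(\log n/\log(\eps d))$ and, more importantly, lets many short paths run in parallel. To build $V_{t+1}$, we will:
\begin{itemize}[label={}]
\item choose a connected "core" tree $T\subseteq W$ of size about $m/2$, grown by BFS;
\item for each $i\le t$, connect $T$ to $V_i$ by a path of length $O(\log n/\log d)$ inside $W$.
\end{itemize}
Routing all $t\le r$ paths while keeping $|V_{t+1}|\le m$ is too costly if the paths are long. So we will instead use that each $V_i$, of size $m$, has a neighborhood of size $\Omega(md)$ in $W$ by the small-set expansion. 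The core $T$ also expands to a set of size about $(\eps d)^{\ell}|T|$ within $\ell$ steps. Since $|N(V_i)|\cdot|T|\cdot d\gtrsim m^2 d^2\gg n d$ whenever $m^2\gtrsim n\log d/d$, a counting/random-choice argument shows $T$ is adjacent to most $V_i$ directly or via one extra vertex. This is exactly where the $\sqrt{n\log d/d}$ scale, and hence the $\log d$, arises. We plan to build $V_i$ as random-walk-like trees, so that their neighborhoods are spread out and the $\log d$ loss comes from a coupon-collector/union-bound over the $r$ sets.

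\textbf{Step 3 (bookkeeping).} For the few $V_i$ that are missed, we add short paths. Their number is $O(r/d)$, and each path has length $O(\log n/\log d)$, so they fit into the budget $m$. Finally we verify that the invariant and $|U|\le\eta n$ are preserved throughout.

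\textbf{Main obstacle.} Step 2 is the hard part: proving that a core of size about $m$ reaches almost all existing branch sets within one or two steps, using only edge expansion and no randomness in $G$. Our first attempt will be to build all branch sets as trees with a "spread" property: no vertex is adjacent to more than $O(\log d)$ of them. We would then count edges between $N(T)$ and $\bigcup_i N(V_i)$ using the expansion of $N(T)$ inside $W$, which holds because $h_{\eps n}\ge(\frac12+\eps)d$ forces most neighbors to be new.
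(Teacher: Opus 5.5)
\textbf{There is a genuine gap in Step~2, and Step~3 inherits it.} Step~1 is essentially sound; bounding the garbage is where $h_{\eps n}(G)\ge(\frac12+\eps)d$ genuinely helps. Step~2, however, rests on random-graph heuristics that do not follow from edge expansion, and you leave it unresolved. With $\Delta(G)\le d$, a bound $e_G(S,V(G)\setminus S)=\Omega(d|S|)$ only gives $|N_G(S)|=\Omega(|S|)$.

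A concrete example: take a $D$-regular graph $H$ on $n/t$ vertices with $\lambda_2(H)\le 3\sqrt D$, $D$ a large constant. Replace each vertex by a clique $K_t$ and each edge by $K_{t,t}$. The result $G$ is $d$-regular with $d=(D+1)t-1$. A short spectral computation gives $h(G)\ge\eps d$ and $h_{\eps n}(G)\ge(\frac12+\eps)d$ for a fixed small $\eps$. Yet in $G$:
\begin{itemize}
\item balls grow only by a factor of about $D$ per step, so with $t=n^{1/3}$ the diameter is $\Theta(\log n)$ while $\log n/\log d=O(1)$;
\item a connected union of whole cliques of total size $m$ has at most $Dm$ neighbours, not $\Omega(md)$;
\item two sets of size $o(n)$ need not be within distance two (take unions of cliques lying far apart in $H$). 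Hence $|N(V_i)|\cdot|T|\cdot d\gg nd$ says nothing about a given $T$.
\end{itemize}

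Step~3 also fails as stated. The bound of $O(r/d)$ missed sets is unsupported. Even granting it, paths of length $\Theta(\log n)$ cost about $\frac{\beta\log n}{C\log d}\,m$, which exceeds $m$ once $\log n\gg\log d$. Three further problems:
\begin{itemize}
\item Your spread property contradicts your own parameters. $r$ sets with $\Omega(md)$ neighbours give $\Omega(\beta Cnd)$ incidences, so the average multiplicity is $\Theta(d)$, not $O(\log d)$.
\item Nothing protects $N(V_i)$ from being swallowed by later branch sets and garbage.
\item A union bound over the $r$ sets costs $\log r\approx\log n$, not $\log d$, and would only give $\sqrt{nd/\log n}$.
\end{itemize}

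\textbf{What is missing, and how the paper proceeds.} The paper does not reprove this statement. It is a special case of Theorem~\ref{thm:main}, whose proof never uses $h_{\eps n}(G)$, and that proof supplies exactly what your Step~2 lacks.
\begin{itemize}
\item \emph{Anchors and connectors.} Branch sets are split into fixed anchors $A_1,\dots,A_m$ (with $m\approx\xi n/\ell$ and $|A_i|\le\ell$) and connectors $T_1,\dots,T_k$, and $B_j=A_{i_j}\cup T_j$. Adjacency of $B_j$ and $B_r$ comes from an edge between $T_j$ and $A_{i_r}$, so no new set ever has to reach earlier connectors.
\item \emph{Large neighbourhoods are built, not assumed.} They are not a property of arbitrary connected sets. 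They are built greedily: at each step one adds a boundary vertex with at least $\sigma d$ new neighbours (Lemma~\ref{lem:large-neighbourhood}).
\item \emph{Robustness by bounded multiplicity.} The sets $Q_i\subseteq N_G(A_i)$ with $|Q_i|\ge cd\ell$ are trimmed so that every vertex lies in at most $\lfloor 3pd\rfloor$ of them. This multiplicity is linear in $d$, not $O(\log d)$, and it makes the $Q_i$ robust: deleting any $\zeta n$ vertices spoils fewer than $m/16$ of them (Lemma~\ref{lem:stable-branches}).
\item \emph{Restoring expansion.} After each deletion the Cheeger constant is restored on a large remainder $U_j$ (Lemma~\ref{lem:remainder}).
\item \emph{Random-walk hitting sets.} Inside $U_j$, a lazy random walk of length $t$ avoids a set $W$ with probability $\exp(-\Omega(|W|t/N))$ (Lemma~\ref{lem:avoid}). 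Via the connected hitting set lemma (Lemma~\ref{lem:connected-hitting}), this yields a connected $T_j$ with $|T_j|=O(\frac{n\log d}{d\ell})\le C\ell$ meeting every active $Q_i$.
\end{itemize}
The $\log d$ arises as $\log(qs/N)$ with $qs/N=\Theta(d)$. Choosing $\ell\approx\sqrt{n\log d/d}$ then yields $\Theta(n/\ell)=\Theta(\sqrt{nd/\log d})$ branch sets.
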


\begin{theorem} 
[Krivelevich--Nenadov~{\cite[Theorem~1.2]{KN}}]
\label{thm:KN-basic}
For every $\eps>0$, there are constants $\beta>0$ and $n_0\in\N$
such that the following holds. Let $G$ be a graph on $n\ge n_0$
vertices with maximum degree at most $d\ge3$. If
$h(G)\ge\eps d$, then $\ccl(G)\ge\beta\sqrt n$.
\end{theorem}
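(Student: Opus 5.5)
The plan is to discard most of the edge-expansion information and keep only the \emph{vertex} expansion it implies. Then I would combine that with a separator theorem for graphs excluding a complete minor. The first step is to convert edge expansion to vertex expansion. Let $S\subseteq V(G)$ with $|S|\le n/2$. Every edge leaving $S$ ends in $N(S)\setminus S$, and each vertex there receives at most $d$ such edges. Hence
\[
 |N(S)\setminus S|\ \ge\ \frac{e_G(S,V(G)\setminus S)}{d}\ \ge\ \frac{h(G)}{d}\,|S|\ \ge\ \eps|S| .
\]
So $G$ is an $\eps$-vertex-expander for sets of size at most $n/2$, and the parameter $d$ disappears from the problem. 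The hypothesis $d\ge3$ only ensures the setting is nontrivial.

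The second step shows that such a graph has no small balanced separator. Suppose $X\subseteq V(G)$ and $V(G)\setminus X=A\cup B$ with no edges between $A$ and $B$ and $|A|,|B|\le 2n/3$. If $|X|\ge n/10$, we stop. Otherwise, take the union $S$ of components of $G-X$ chosen greedily, so that $|S|\le n/2$ and $|S|\ge c_0 n$ for an absolute constant $c_0>0$. This is possible because every component has size at most $2n/3$, and $|V\setminus X|\ge 9n/10$. Balancing between $S$ and its complement in $V\setminus X$, one of the two pieces has size in $[c_0n,n/2]$. Since $N(S)\setminus S\subseteq X$, the first step gives $|X|\ge \eps c_0 n$. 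Thus every balanced separator of $G$ has order $\Omega_\eps(n)$.

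The third step invokes the separator theorem of Kawarabayashi and Reed: a graph on $n$ vertices with no $K_h$ minor has a balanced separator (parts of size at most $2n/3$) of order at most $C h\sqrt n$. Take $h=\ccl(G)+1$. Then $G$ is $K_h$-minor-free, so $\eps c_0 n\le C(\ccl(G)+1)\sqrt n$. This gives $\ccl(G)\ge \beta\sqrt n$ with $\beta=\eps c_0/(2C)$ once $n\ge n_0(\eps)$.

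The main obstacle is the third step. The whole proof leans on a separator bound that is \emph{linear} in $h$. The older Alon--Seymour--Thomas bound $O(h^{3/2}\sqrt n)$ would only give $\ccl(G)=\Omega(n^{1/3})$. The Plotkin--Rao--Smith bound $O(h\sqrt{n\log n})$ would lose a $\sqrt{\log n}$ factor. If I wanted to avoid citing Kawarabayashi--Reed, I would instead build the minor directly in the expander, in the style of Krivelevich--Sudakov. The idea is to greedily grow about $\sqrt n$ disjoint connected branch sets of size about $\sqrt n$ each. Any two of them are then joined by short paths that avoid the already-used vertices, of which there are $O(n)$ times a small constant. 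The robustness of $\eps$-vertex-expansion under deleting a small fraction of vertices would keep the remaining graph expanding and of diameter $O_\eps(\log n)$. Getting the bookkeeping right so that the total number of used vertices stays below $\eps' n$ is the delicate part of that alternative route.
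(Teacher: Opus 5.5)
The paper does not prove this statement. It imports it from Krivelevich--Nenadov and uses it only in Case~1 of the proof of Theorem~\ref{thm:main}, where $3\le d<d_0$ and the paper's own Lemma~\ref{lem:scale} (which needs $d\ge d_0$) is unavailable. Your argument therefore has to stand on its own, and it does, modulo the separator theorem.

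\textbf{The main route is correct.}
\begin{itemize}
\item Step one is right: each vertex of $N_G(S)$ absorbs at most $d$ boundary edges, so $|N_G(S)|\ge\eps|S|$ whenever $|S|\le n/2$.
\item Step two is right, and you can drop the greedy balancing. $A$ and $B$ are already unions of components of $G-X$, so let $S$ be the smaller of the two. If $|X|<n/10$, then $\frac{7n}{30}\le |S|\le \frac{n}{2}$, and $N_G(S)\subseteq X$ gives $|X|\ge\frac{7\eps n}{30}$. Hence in all cases $|X|\ge\min\{\frac{n}{10},\frac{7\eps n}{30}\}$.
\item Step three then gives $\ccl(G)+1=\Omega_\eps(\sqrt n)$.
\item One point you should state explicitly: you need the Kawarabayashi--Reed bound with an absolute implied constant, uniform in $h$. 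You apply it with $h=\ccl(G)+1$, which is not fixed but grows with $n$.
\end{itemize}

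\textbf{How it compares with the machinery around it.} The paper's surrounding tools are the connected hitting sets and random-walk estimates of Lemma~\ref{lem:connected-hitting}, Lemma~\ref{lem:avoid} and Corollary~\ref{cor:partial-hitting}.
\begin{itemize}
\item Your route is much shorter and proves more: it uses only vertex expansion and needs no degree bound.
\item It outsources all the difficulty to a deep structural theorem.
\item Because step one discards $d$, it is capped at order $\sqrt n$, since a separator has at most $n$ vertices. So it cannot be pushed to the $\sqrt{nd/\log d}$ bound of Theorem~\ref{thm:main}.
\item The direct construction is elementary and scales with $d$.
\end{itemize}

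\textbf{The fallback sketch does not close as described.} With about $\sqrt n$ branch sets there are about $n/2$ pairs. Joining each pair by its own path of length $\Theta(\log n)$, which is all a diameter bound gives you, uses $\Theta(n\log n)$ vertices, not $\eps' n$. This is the $\log$ loss behind Krivelevich--Sudakov-type $\sqrt{n/\log n}$ bounds. The missing idea is to abandon pairwise paths altogether. Instead, build each new branch set as one connected set of size $O(\sqrt n)$ that meets the neighbourhoods of all, or most, of the other branch sets simultaneously. This is exactly what the random-walk hitting estimate of Lemma~\ref{lem:avoid} delivers, via Lemma~\ref{lem:connected-hitting} and Corollary~\ref{cor:partial-hitting}.
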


The conclusion of Theorem \ref{thm:KN-basic}  is weaker than that of Theorem \ref{thm:KN-strong}, while
Theorem \ref{thm:KN-strong} has the stronger edge expansion assumption 
$h_{\eps n}(G)\ge(\frac{1}{2}+\eps)d$. Krivelevich and Nenadov \cite{KN}  pointed out that without any doubt, Theorem \ref{thm:KN-strong} would be aesthetically more appealing if the same order of $\ccl(G)$ would hold without the stronger condition
$h_{\eps n}(G)\ge(\frac{1}{2}+\eps)d$. 
%They asked whether the second Cheeger condition in
%Theorem~\ref{thm:KN-strong} can be omitted without weakening the
%conclusion.

\begin{question}
[Krivelevich--Nenadov~{\cite[Question~4.1]{KN}}]
\label{ques:KN}
Let $G$ be a graph on $n\ge n_0$ vertices with maximum degree at most
$d=d(n)\ge3$ and $h(G)\ge\eps d$, where $\eps>0$ is a constant. Is
it true that $\ccl(G)=\Omega(\sqrt{\frac{nd}{\log d}})$?
\end{question}

The following theorem answers Question~\ref{ques:KN} affirmatively.

\begin{theorem}\label{thm:main}
For every $\eps>0$, there are constants $\beta=\beta(\eps)>0$ and $n_0=n_0(\eps)\in\N$ such that the following holds. 
 Let $G$ be a graph on $n\ge n_0$ vertices with maximum degree
at most $d$, where $d$ is an integer with $d\ge 3$. If $h(G)\ge\eps d$, then $\ccl(G)\ge \beta\sqrt{\frac{nd}{\log d}}$.
\end{theorem}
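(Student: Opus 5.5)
We plan to follow the Krivelevich--Nenadov framework: build $t=\beta\sqrt{nd/\log d}$ branch sets, each a connected set of size about $m=C\sqrt{n\log d/d}$, chosen so that every pair of branch sets is joined by an edge. First we dispose of small $d$. If $d\le d_0(\eps)$, then $\sqrt{nd/\log d}=O_\eps(\sqrt n)$, so Theorem~\ref{thm:KN-basic} already gives the bound after adjusting $\beta$. If $d$ is larger than $\sqrt n$ or so, the target $\sqrt{nd/\log d}$ is at least of order $n^{3/4}$, and the natural branch sets are single vertices or very short paths. Hence the interesting regime is $d_0\le d\le n^{1-o(1)}$, and we assume it from now on.

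The condition $h(G)\ge\eps d$ together with $\Delta\le d$ says, via the discrete Cheeger inequality, that the lazy random walk $P=\frac12(I+A/D)$ on $G$ has spectral gap $\Omega(\eps^2)$. Here we use the degree-normalized walk, or add loops to make the graph $d$-regular; this keeps $h$ at least $\eps$ in normalized form. Consequently the walk mixes in $T=O_\eps(\log n)$ steps. Mixing alone does not suffice, because walks of length $\log n$ contain only $\log n$ vertices while we need $m$ vertices per branch set.

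The main step replaces the use of $h_{\eps n}\ge(\frac12+\eps)d$ from Krivelevich--Nenadov. In their proof that condition forces most vertex sets of linear size to send a majority of their edges outward, which is what allows disjoint branch sets to be grown greedily. We instead grow each branch set as the trace of a lazy random walk of length $L\approx m$, started from a stationary-like vertex and conditioned to avoid the union $U$ of previously built sets. Since $|U|\le tm=O(\beta C n)$ is a small fraction of $n$, a spectral hitting-time estimate bounds the time the walk spends in $U$. Specifically, for a set $U$ of stationary measure $\mu(U)\le\delta$, the expected number of visits to $U$ in $L$ steps is at most $L\delta+O(\delta T)$, and an expander Chernoff bound makes deviations exponentially unlikely. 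Therefore the trace of the walk contains $\Omega(L)$ fresh vertices forming a connected set outside $U$, once excursions into $U$ are shortcut. To make this work we first remove a small set of bad vertices and keep $U$ spread out. This step is the main obstacle: the difficulty is guaranteeing that the walk's visits to $U$ can be cut out while the new set remains connected and disjoint from $U$. To handle it we would first try to grow the sets as trees inside the residual graph $G-U$, using the fact that by $h(G)\ge\eps d$, the graph $G-U$ still contains a subgraph of $\Theta(n)$ vertices with Cheeger constant $\Omega(\eps d)$ when $|U|\le \eps n/4$ (a standard pruning argument).

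Pairwise adjacency of the branch sets comes from a hitting-set argument. The random walk trace $W$ of length $L$ has expected edge-neighbourhood of size about $Ld$ in stationary measure terms. For a fixed earlier branch set $B$ with $|N(B)|\gtrsim |B|d/\mathrm{poly}$, the probability that $W$ avoids $N(B)$ is at most $\exp(-\Omega(L\,\mu(N(B))))$, again by the expander Chernoff bound. We will choose $m$ with $m^2 d/n\ge C'\log d$ so that this failure probability is at most $d^{-2}$, which is why the factor $\log d$, rather than $\log n$, appears. The remaining nonadjacent pairs are few in expectation, at most $t^2/d^{2}$ per set. We delete them by discarding one set from each bad pair, or by using Theorem~\ref{thm:KN-basic}-style linking paths through a reserved random set. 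This leaves $\Omega(t)$ pairwise-adjacent branch sets, which gives $\ccl(G)\ge\beta\sqrt{nd/\log d}$.
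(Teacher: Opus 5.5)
Your adjacency step does not get past the $\log n$ barrier, and removing that barrier is exactly what the theorem does.

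With $L\approx m$ and $m^2d/n\ge C'\log d$, your bound on the probability that the new walk misses $N(B)$, for a given earlier set $B$, is only $d^{-2}$. So you expect about $t^2/d^2$ non-adjacent pairs. Discarding one set from each bad pair keeps $\Omega(t)$ sets only if $t^2/d^2\le t/2$, i.e.\ $t\le d^2/2$, i.e.\ $n=O(d^3\log d)$. If $d$ grows slowly (say $d=\log n$), each set has about $t/d^2\to\infty$ bad partners, and you can keep only about $d^2\ll t$ sets.

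Pushing the per-pair failure probability below $1/t$ needs $m^2d/n=\Omega(\log n)$, which yields only $\sqrt{nd/\log n}$. The fallback of linking paths through a reserved random set would have to route $\Theta(t^2/d^2)$ connections between prescribed pairs while keeping the branch sets disjoint, and you give no mechanism for that.

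The missing ingredient is a connected set meeting \emph{all} of $q$ prescribed sets of size at least $s$, at cost $K\frac{N}{s}\log\frac{qs}{N}$ (Lemma~\ref{lem:connected-hitting}, due to Krivelevich--Nenadov). A single walk only meets all but an $e^{-\Omega(Ls/N)}$-fraction of them (Corollary~\ref{cor:partial-hitting}). In the paper $q=\Theta(n/\ell)$ and $s=cd\ell$, so $qs/N\le d$, and this is where $\log d$ replaces $\log n$.

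Your symmetric design also leaves two points unproved. In it, each branch set must both hit the neighbourhoods of all earlier sets and itself have a large neighbourhood for later ones.
\begin{enumerate}
\item Nothing shows that a walk trace (or a hitting set) $B$ satisfies $|N_G(B)|=\Omega(d|B|)$. The hypothesis $h(G)\ge\eps d$ alone gives only $|N_G(B)|\ge\eps|B|$.
\item Later sets are grown in a pruned residual graph $G[U']$, so what matters is $|N_G(B)\cap U'|$. Since $|N_G(B)|\approx md$ is far smaller than the $\Theta(n)$ vertices eventually removed, nothing prevents the neighbourhoods of many earlier sets from being swallowed.
\end{enumerate}

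The paper avoids both problems with an asymmetric construction. First it builds about $\xi n/\ell$ disjoint connected anchors $A_i$ with $|A_i|\le\ell$, by greedy growth (Lemma~\ref{lem:large-neighbourhood}). Each anchor comes with a set $Q_i\subseteq N_G(A_i)$ of size at least $cd\ell$, trimmed so that every vertex lies in at most $\lfloor 3pd\rfloor$ of them, for a small constant $p=p(\eps)$. This bounded overlap gives a stability property: deleting any $\zeta n$ vertices spoils fewer than a sixteenth of the $Q_i$ (Lemma~\ref{lem:stable-branches}(iii)).

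It then chooses connectors $T_1,\dots,T_k$ one at a time, as connected hitting sets of all surviving $Q_i\cap U_j$ inside pruned graphs $G[U_j]$. Your pruning idea is essentially Lemma~\ref{lem:remainder}. It sets $B_j=A_{i_j}\cup T_j$. Then $B_j$ and $B_r$ are adjacent because $T_j$ touches $A_{i_r}$, so the connectors never need large neighbourhoods.

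Finally, your dismissal of the range $d\ge n^{1-o(1)}$ is not justified. $G$ need not contain large cliques, and Theorem~\ref{thm:KT} applied to $G$ itself loses a factor of order $\sqrt{n/d}$. The paper treats $n/(16\log d)<d\le n/4$ separately, through the large-average-degree minor of Lemma~\ref{lem:scale}(ii) combined with Theorem~\ref{thm:KT}.
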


The bound in Theorem~\ref{thm:main} is best possible up to a constant
factor. Indeed, Krivelevich and Nenadov~\cite{KN}
observed that, for a sufficiently large constant \(C\) with 
\(C/n\le p<1/2\), the binomial random graph \(G(n,p)\) contains with
high probability a large induced subgraph \(G'\) satisfying even the
stronger hypotheses of Theorem~\ref{thm:KN-strong}, with maximum degree at most 
\(d'=\lceil 1.1np\rceil\) and some constant \(\eps>0\). On the other
hand, the upper bound of Fountoulakis, K\"uhn and
Osthus~\cite{FKO08} gives
\[
 \ccl(G')
 \le \ccl(G(n,p))
 =O\!\left(\sqrt{\frac{n^2p}{\log(np)}}\right).
\]
Since \(|V(G')|=\Theta(n)\) and \(d'=\Theta(np)\), this upper bound is
of order
$
 O\!\left(
   \sqrt{\frac{|V(G')|d'}{\log d'}}
 \right),
$
showing that the dependence on \(n\) and \(d\) in
Theorem~\ref{thm:main} cannot be improved beyond a constant factor.

Theorem~\ref{thm:main} also strengthens the spectral consequence of Krivelevich and Nenadov~\cite[Corollary~1.3]{KN}, which assumes the stronger inequality $\lambda_2(G)<(\frac12-\eps)d$.

\begin{corollary}\label{cor:spectral}
For every $\eps\in(0,1]$, there are constants $\beta=\beta(\eps)>0$ and $n_0=n_0(\eps)\in\N$ such that the following holds. Let $G$ be an $n$-vertex $d$-regular graph with $n\ge n_0$ and $d\ge3$, and let $\lambda_2(G)$ be the second largest eigenvalue of its adjacency matrix. If
$\lambda_2(G)\le(1-\eps)d$, then
$\ccl(G)\ge\beta\sqrt{\frac{nd}{\log d}}$.
\end{corollary}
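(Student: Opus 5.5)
The plan is to reduce Corollary~\ref{cor:spectral} to Theorem~\ref{thm:main} through the easy direction of the discrete Cheeger inequality. The spectral gap $d-\lambda_2(G)\ge \eps d$ will be turned into the edge-expansion bound $h(G)\ge \frac{\eps}{2}d$. After that, Theorem~\ref{thm:main} is applied with $\eps/2$ in place of $\eps$, and its constants become the $\beta(\eps)$ and $n_0(\eps)$ of the corollary.

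\textbf{Step 1: the Cheeger bound.} Let $A$ be the adjacency matrix and $L=dI-A$ the Laplacian. Since $G$ is $d$-regular, $\mathbf 1$ is an eigenvector of $A$ with eigenvalue $d$. Therefore
\[
\min_{x\perp\mathbf 1,\ x\ne0}\frac{x^{\top}Lx}{x^{\top}x}=d-\lambda_2(G)\ge \eps d.
\]
Fix $\emptyset\ne S\subseteq V(G)$ with $|S|=s\le n/2$, and put $x=(n-s)\mathbf 1_S-s\mathbf 1_{V\setminus S}$, so that $x\perp\mathbf 1$. Each edge between $S$ and $V\setminus S$ contributes $n^2$ to $x^{\top}Lx=\sum_{uv\in E}(x_u-x_v)^2$, and edges inside either side contribute nothing. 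Hence
\[
x^{\top}Lx=n^2e_G(S,V\setminus S),\qquad x^{\top}x=s(n-s)^2+(n-s)s^2=sn(n-s).
\]
The Rayleigh bound then gives $e_G(S,V\setminus S)\ge \eps d\, s(n-s)/n$. Using $n-s\ge n/2$, this yields $h_G(S)\ge \eps d/2$. As $S$ was arbitrary, $h(G)\ge \frac{\eps}{2}d$.

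\textbf{Step 2: apply the theorem.} Take $\beta(\eps)=\beta_{\ref{thm:main}}(\eps/2)$ and $n_0(\eps)=n_0^{\ref{thm:main}}(\eps/2)$. A $d$-regular graph has maximum degree exactly $d$, and $d\ge3$ is an integer, so the hypotheses of Theorem~\ref{thm:main} hold. It gives $\ccl(G)\ge\beta\sqrt{nd/\log d}$.

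There is no real obstacle. The only points to check are that the test vector is orthogonal to $\mathbf 1$, so the variational characterization of $\lambda_2$ applies, and that the factor $\frac12$ lost in Step~1 is absorbed by invoking Theorem~\ref{thm:main} with $\eps/2$. The restriction $\eps\le1$ does not affect the argument.
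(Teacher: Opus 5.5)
Your proposal is correct and follows the paper's proof essentially verbatim. The paper plugs $\mathbf{1}_S-\frac{|S|}{n}\mathbf{1}$, a scalar multiple of your vector, into the Rayleigh quotient for $dI-A$ on $\mathbf{1}^{\perp}$. This gives $h(G)\ge\frac{\eps d}{2}$, and the paper then invokes Theorem~\ref{thm:main} with $\frac{\eps}{2}$ in place of $\eps$, exactly as you do.
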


\begin{proof}
Let $A$ be the adjacency matrix of $G$. For a set $S\subseteq V(G)$, let $\boldsymbol{1}_S\in\mathbb R^{V(G)}$ denote its indicator vector, and let $\boldsymbol{1}$ denote the all-ones vector. For a vector $\boldsymbol{y}=(y_v)_{v\in V(G)}$, we write
\[
 \|\boldsymbol{y}\|_2
 =\left(\sum_{v\in V(G)}y_v^2\right)^{1/2}
\]
for its Euclidean norm. Fix a nonempty set $S\subseteq V(G)$ with $|S|\le\frac{n}{2}$, and let
$\boldsymbol{x}=\boldsymbol{1}_S-\frac{|S|}{n}\boldsymbol{1}$. Then $\boldsymbol{x}$ is orthogonal to $\boldsymbol{1}$ and 
\[
\|\boldsymbol{x}\|_2^2=|S|(1-|S|/n)^2+(n-|S|)(|S|/n)^2=|S|(n-|S|)/n.
\]
Since $\lambda_2(G)$ is the largest eigenvalue of $A$ on the subspace orthogonal to $\boldsymbol{1}$, the Rayleigh quotient gives
\[
\begin{aligned}
 e_G(S,V(G)\setminus S)
 &=\boldsymbol{1}_S^{\mathsf T}(dI-A)\boldsymbol{1}_S\\
 &=\boldsymbol{x}^{\mathsf T}(dI-A)\boldsymbol{x}\\
 &\ge(d-\lambda_2(G))\|\boldsymbol{x}\|_2^2\\
 &=(d-\lambda_2(G))\frac{|S|(n-|S|)}{n}\\
 &\ge\frac{\eps d}{2}|S|.
\end{aligned}
\]
Thus $h(G)\ge\frac{\eps d}{2}$. Applying Theorem~\ref{thm:main} with $\frac{\eps}{2}$ for $\eps$ proves the corollary.
\end{proof}

Section~\ref{sec:preliminaries} records the probabilistic and extremal tools used in the proofs.  In Section~\ref{sec:construction}, we construct many pairwise disjoint small connected sets such that, after the deletion of
any sufficiently small vertex set from their external neighbourhoods, all but a small proportion of these neighbourhoods remain large.  In Section~\ref{sec:main-proof}, we combine these connected sets with connected hitting sets to construct minors and prove Theorem~\ref{thm:main}, where a  \emph{connected set} is 
a vertex subset  that induces a connected subgraph and a  \emph{hitting set} is a set that intersects every member of a given family of sets. 

\section{Preliminaries}\label{sec:preliminaries}

For $r\in\N$, write $[r]=\{1,\ldots,r\}$. For
$X\subseteq V(G)$, let $G[X]$ be the subgraph induced by $X$, and
let $N_G(X)$ be the external neighbourhood of $X$, that is the set of vertices in $V(G)\setminus X$ with a
neighbour in $X$. 
%A vertex set is called connected if it induces a
%connected subgraph. 
A set $T$ \emph{meets} a set $W$ if
$T\cap W\ne\emptyset$. We write $\delta(G)$ and $\Delta(G)$ for the
minimum and maximum degrees, and $\deg_G(v)$ for the degree of a
vertex $v\in V(G)$. The average degree of a nontrivial graph
$G$ is $\overline d(G)=\frac{2|E(G)|}{|V(G)|}$. The symbol
$\mathbin{\dot\cup}$ denotes a union of pairwise disjoint sets.

\begin{lemma}
[Krivelevich--Nenadov~{\cite[Lemma~3.1]{KN}}]
\label{lem:connected-hitting}
For every $\alpha\in(0,\frac{1}{2})$, there exist a constant
$K=O(\alpha^{-3})$ and an integer $N_0$ such that the following
holds. Let $H$ be a graph on $N\ge N_0$ vertices with maximum degree
at most $d$ and $h(H)\ge\alpha d$. Let $s>0$ and let $q\le N$ be a
positive integer such that $sq\ge2N$. If
$W_1,\ldots,W_q\subseteq V(H)$ and $|W_i|\ge s$ for every
$i\in[q]$, then there is a connected set $T\subseteq V(H)$ which
meets every $W_i$ (i.e., $T$ is a connected hitting set) and satisfies
\[
 |T|\le K\cdot \frac{N}{s}\log\frac{qs}{N}.
\]
\end{lemma}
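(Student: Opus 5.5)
This lemma is quoted from Krivelevich--Nenadov and used as a black box, so what follows is only a sketch of how one could prove it. The plan is to let $T$ be the trace of a single lazy random walk, which is automatically a connected set, and then to patch the few sets $W_i$ it misses. Pad every vertex of $H$ with loops so that all degrees equal $d$. The lazy walk on this multigraph has uniform stationary distribution $\pi$. By the discrete Cheeger inequality, $h(H)\ge\alpha d$ gives a spectral gap of order $\alpha^2$.

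\emph{Step 1: one long walk.} Start the walk at a $\pi$-distributed vertex and run it for
\[
L=C\alpha^{-2}\frac{N}{s}\log\frac{qs}{N}
\]
steps. For a fixed set $W$ with $|W|\ge s$, the standard expander hitting estimate (an Alon--Kahale/Ajtai--Koml\'os--Szemer\'edi type bound: spectral gap of the walk killed on $W$) gives
\[
\Pr[\text{walk avoids } W]\le\Bigl(1-c\alpha^2\tfrac{s}{N}\Bigr)^{L}\le\Bigl(\tfrac{N}{qs}\Bigr)^{cC}.
\]
Write $x=qs/N\ge2$. The expected number of missed sets is then at most $q\,x^{-cC}=\frac Ns\,x^{1-cC}$. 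By Markov, with probability at least $1/2$ at most $\frac{2N}{s}x^{1-cC}$ sets are missed. The trace has at most $L+1$ vertices, which is already within the required bound.

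\emph{Step 2: patching.} Each missed $W_i$ must be joined to the trace $T_0$ by a short path. Edge expansion $\alpha d$ with maximum degree $d$ gives vertex expansion: every $S$ with $|S|\le N/2$ satisfies $|N_H(S)|\ge\alpha|S|$. Hence the $r$-ball around $T_0$, with $|T_0|\ge L$, exceeds $N/2$ once $r\approx\alpha^{-1}\log(N/L)$. Likewise the ball around $W_i$ exceeds $N/2$ once $r\approx\alpha^{-1}\log(N/s)$. The two balls must then meet, so $W_i$ is within distance $O(\alpha^{-1}\log(N/s))$ of $T_0$. Adding one such geodesic for each missed set keeps $T$ connected and makes it meet every $W_i$.

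\emph{Main obstacle.} The delicate point is the bookkeeping in Step 2. The patching cost is about
\[
\frac Ns\,x^{1-cC}\cdot\alpha^{-1}\log\frac Ns,
\]
and $\log(N/s)$ need not be $O(\log x)$ when $s$ is small. I would try two remedies, in this order.

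1. Iterate the walk. After the first phase only $q'\le\frac Ns x^{1-cC}$ sets remain. Instead of geodesics, continue the walk for phases of geometrically decreasing length, so that the total length telescopes to $O\bigl(\frac Ns\log x\bigr)$ plus a term of order $N/s$.
2. Use the fact that the extra factor appears only when $\log(N/s)\gg\log x$. In that regime $x^{1-cC}$ is tiny, so by taking $C$ large (with $K=O(\alpha^{-3})$ absorbing the $\alpha^{-2}$ from the walk and the $\alpha^{-1}$ from patching) the expected number of missed sets can be forced below $1$. Then no patching is needed at all, except when $N\ge N_0$ is used to control the boundary cases.

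Making one of these two routes rigorous is the step I expect to require the most care.
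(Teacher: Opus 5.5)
The paper does not prove this lemma; it is quoted from Krivelevich--Nenadov. Your sketch therefore has to stand on its own, and it has a genuine gap: Step~2 is never completed, and neither remedy closes it.

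\textbf{Where it fails.} Step~1 is sound. Padding to a $d$-regular multigraph, Cheeger, and the Dirichlet bound $\Pr[\text{avoid }W]\le(1-\gamma|W|/N)^L$ with $\gamma=\Omega(\alpha^2)$ are all standard. The trouble is that Step~2 charges every missed $W_i$ its worst-case distance of order $\alpha^{-1}\log(N/s)$. Take $x=qs/N=2$ and $s=\sqrt N$. The budget is $K\sqrt N\log 2$. Step~1 only guarantees that at most about $2^{2-cC}\sqrt N$ sets are missed, so patching costs order $\sqrt N\log N$, which is over budget for every fixed $C$.

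Remedy~2 has the implication reversed. The bound $\frac Ns x^{1-cC}<1$ requires $(cC-1)\log x>\log\frac Ns$, which is exactly the unproblematic regime. When $\log\frac Ns\gg\log x$ (e.g.\ $x=2$), the factor $x^{1-cC}$ is a constant, not tiny.

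Remedy~1 fails for a related reason. The surviving sets still have size only $s$, so the per-set avoidance bound $\exp(-c\alpha^2 s\ell/N)$ for a further stretch of length $\ell$ does not improve as fewer sets remain. Phases of decreasing length simply concatenate into one walk of length $O(L)$. With this bound, pushing the roughly $N/s$ survivors below one in expectation costs about $\alpha^{-2}\frac Ns\log\frac Ns$ more steps, so nothing telescopes. A smaller point: $|T_0|\ge L$ is also unjustified, because the lazy walk revisits vertices.

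\textbf{The missing idea.} Fatten the targets instead of lengthening the walk, and bound the \emph{expected} patching cost rather than the worst case. Let $B_r(W_i)$ be the set of vertices within distance $r$ of $W_i$, and let $r_i$ be the least $r$ with $|B_r(W_i)|>N/2$. Your vertex-expansion bound gives $|B_r(W_i)|\ge(1+\alpha)^r s$ for $r\le r_i$.

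Let $T$ be $T_0$ together with a shortest path from $T_0$ to each $W_i$. Then
\[
|T|\le L+1+\sum_i\operatorname{dist}(T_0,W_i),\qquad \mathbb E\operatorname{dist}(T_0,W_i)=\sum_{r\ge0}\Pr\bigl[T_0\cap B_r(W_i)=\emptyset\bigr].
\]
With $L=C\alpha^{-2}\frac Ns\log x$, the terms with $r\le r_i$ satisfy
\[
\sum_{r=0}^{r_i}\Pr\bigl[T_0\cap B_r(W_i)=\emptyset\bigr]\le\sum_{r\ge0}x^{-cC(1+\alpha)^r}\le x^{-cC}\Bigl(1+\frac{1}{cC\alpha\log 2}\Bigr).
\]
For $r>r_i$, use two facts:
\begin{itemize}
\item $\Pr[T_0\cap B=\emptyset]\le\min\{e^{-a},\,|V(H)\setminus B|/N\}$ with $a=\frac{cC}{2}\frac Ns\log x$, because $|B|>N/2$ and $X_0$ is uniform;
\item $|V(H)\setminus B_{r+1}(W_i)|\le|V(H)\setminus B_r(W_i)|/(1+\alpha)$, by vertex expansion applied to the complement.
\end{itemize}
This contributes $O(\alpha^{-1}(a+1)e^{-a})$ per set.

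Sum over the $q=x\frac Ns$ sets and take $cC\ge4$. The expected patching cost is then $O(\frac{N}{\alpha s})$, which fits inside $K\frac Ns\log x$ because $\log x\ge\log 2$. Hence $\mathbb E|T|=O(\alpha^{-2}\frac Ns\log x)$, and some realization of the walk works. This argument needs no lower bound on $|T_0|$, and no $\log(N/s)$ factor ever appears.
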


A \emph{lazy random walk of length $t$} in a graph $H$ is a Markov
chain $(X_0,\ldots,X_t)$ with the following transition rule. 
For
$0\le j<t$, conditional on $X_j=v$, if $\deg_H(v)>0$, $X_{j+1}=v$ with probability $\frac{1}{2}$ and $X_{j+1}=u\in N_H(v)$ with probability
$\frac{1}{2\deg_H(v)}$. If $v$ is isolated, $X_{j+1}=v$ with
probability $1$. 
When $E(H)\ne\emptyset$, the distribution
\[
  \pi_H(v)=\frac{\deg_H(v)}{2|E(H)|} \text{ for } v\in V(H)
\]
is stationary: if $X_0$ has distribution $\pi_H$, then $X_j$ has
distribution $\pi_H$ for every $0\le j\le t$. We call the walk
\emph{stationary} when $X_0$ has distribution $\pi_H$. 

\begin{lemma}
[Krivelevich--Nenadov~{\cite[Lemma~2.4]{KN}}]
\label{lem:avoid}
Let \(H\) be a graph on \(N\ge 2\) vertices, and let
\(\Delta=\Delta(H)\ge 1\).
For every \(W\subseteq V(H)\) and every positive integer \(t\),
the probability that a stationary lazy random walk
\((X_0,\ldots,X_t)\) in \(H\) avoids \(W\), that is,
$X_j\notin W$ for every $0\le j\le t,$
is at most
\[
    \exp\left(
        -\frac{h(H)^3|W|t}{8\Delta^3N}
    \right).
\]
\end{lemma}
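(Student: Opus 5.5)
The plan is a spectral argument. I would write the avoidance probability as a quadratic form in the lazy transition matrix restricted to $U=V(H)\setminus W$. Then I would bound the top eigenvalue of that restriction by combining Cheeger's inequality for the lazy walk with the fact that any vector vanishing on $W$ is far from the stationary direction. If $h(H)=0$ there is nothing to prove, so assume $h(H)>0$. Then every vertex has degree at least $h(H)$, because a singleton $S=\{v\}$ is admissible, so $H$ has no isolated vertices and $\pi=\pi_H$ is positive everywhere. Let $P=\frac12(I+D^{-1}A)$ be the lazy transition matrix and $Q=D^{1/2}PD^{-1/2}=\frac12(I+D^{-1/2}AD^{-1/2})$ its symmetrisation. $Q$ is symmetric with spectrum in $[0,1]$, and its top eigenvector is $\phi=\sqrt{\pi}$ with eigenvalue $1$.

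\textbf{Step 1 (reduction to an eigenvalue).} Let $Q_U$ be the principal submatrix of $Q$ on $U$ and $\phi_U$ the restriction of $\phi$ to $U$. Then
\[
\Pr[X_j\notin W\ \forall j\le t]=\pi_U^{\mathsf T}P_U^{t}\boldsymbol 1=\phi_U^{\mathsf T}Q_U^{t}\phi_U\le \|\phi_U\|_2^2\,\lambda_{\max}(Q_U)^t\le \lambda_{\max}(Q_U)^t .
\]
So it suffices to show $\lambda_{\max}(Q_U)\le 1-\gamma$ with $\gamma=\frac{h^3|W|}{8\Delta^3N}$, where $h=h(H)$, and then use $(1-\gamma)^t\le e^{-\gamma t}$.

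\textbf{Step 2 (spectral gap of the full walk).} The conductance $\Phi=\min_{\mathrm{vol}(S)\le |E(H)|}\frac{e(S,S^c)}{\mathrm{vol}(S)}$ satisfies $\Phi\ge h/\Delta$. If $|S|\le N/2$, this follows from $\mathrm{vol}(S)\le\Delta|S|$. Otherwise $|S^c|<N/2$, so $e(S,S^c)\ge h|S^c|\ge h\,\mathrm{vol}(S^c)/\Delta\ge h\,\mathrm{vol}(S)/\Delta$. The standard Cheeger inequality gives gap at least $\Phi^2/2$ for the non-lazy normalised adjacency, and laziness halves it. Hence every $f\perp\phi$ satisfies $f^{\mathsf T}(I-Q)f\ge \frac{h^2}{4\Delta^2}\|f\|_2^2$.

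\textbf{Step 3 (vectors vanishing on $W$).} Take $f$ supported on $U$ and decompose $f=a\phi+f^\perp$ with $f^\perp\perp\phi$. Since $f$ vanishes on $W$, we have $0=\langle f,\phi\boldsymbol 1_W\rangle=a\pi(W)+\langle f^\perp,\phi\boldsymbol 1_W\rangle$. By Cauchy--Schwarz this gives $a^2\le \|f^\perp\|^2/\pi(W)$, so $\|f\|^2\le \|f^\perp\|^2(1+1/\pi(W))\le 2\|f^\perp\|^2/\pi(W)$. Because $\phi$ is in the kernel of $I-Q$,
\[
f^{\mathsf T}(I-Q)f=(f^\perp)^{\mathsf T}(I-Q)f^\perp\ge \frac{h^2}{4\Delta^2}\|f^\perp\|^2\ge \frac{h^2\pi(W)}{8\Delta^2}\|f\|^2 .
\]
Finally, $\pi(W)=\mathrm{vol}(W)/(2|E(H)|)\ge h|W|/(\Delta N)$, using $\deg\ge h$ and $2|E(H)|\le \Delta N$. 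This yields $\lambda_{\max}(Q_U)\le 1-\frac{h^3|W|}{8\Delta^3N}$, which completes the argument via Step 1.

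The main obstacle is Step 3, namely getting a bound \emph{linear} in $|W|$. A direct Dirichlet-Cheeger argument on the level sets of $f$ only gives roughly $(h|W|/(\Delta N))^2$, which is too weak when $W$ is small. The projection trick avoids this by paying only one factor of $\pi(W)$. The remaining care is routine: checking the constants in the lazy Cheeger inequality, and handling the degenerate case $h=0$.
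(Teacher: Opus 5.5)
Your proof is correct. Note that the paper gives no proof of this lemma at all. It is imported from \cite{KN} and used only as a black box in Corollary~\ref{cor:partial-hitting}, so rather than taking a different route, you are supplying a self-contained argument where the paper relies on the citation.

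Your chain of estimates recovers the stated constant exactly:
\begin{itemize}
\item the conductance satisfies $\Phi\ge h(H)/\Delta$;
\item the lazy Cheeger gap gives $f^{\mathsf T}(I-Q)f\ge \frac{\Phi^2}{4}\|f\|_2^2$ for $f\perp\phi$;
\item the projection step costs a factor $\pi(W)/2$;
\item finally $\pi(W)\ge h(H)|W|/(\Delta N)$.
\end{itemize}
This also shows where the cubic dependence on $h(H)/\Delta$ comes from: two factors from Cheeger's inequality and one from the stationary mass of $W$. Along the way you get the slightly stronger intermediate bound $\exp\bigl(-\Phi^2\pi(W)t/8\bigr)$. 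The paper's citation buys only brevity.

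One point in Step~1 should be stated explicitly rather than left implicit. The inequality $\phi_U^{\mathsf T}Q_U^{t}\phi_U\le\|\phi_U\|_2^2\,\lambda_{\max}(Q_U)^t$ needs $Q_U$ to be positive semidefinite. This holds because $Q_U$ is a principal submatrix of the positive semidefinite matrix $Q$. Without it, a negative eigenvalue of large modulus could dominate for even $t$. You also need $0\le\lambda_{\max}(Q_U)\le 1-\gamma$ before raising to the $t$-th power. This is exactly where laziness enters.

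The degenerate cases $W=\emptyset$ (where the Cauchy--Schwarz step divides by $\pi(W)=0$, but the claimed bound is $1$) and $W=V(H)$ (where the probability is $0$) are trivial and can be set aside at the start.
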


\begin{corollary}\label{cor:partial-hitting}
Let $H$ be a graph on $N\ge2$ vertices with maximum degree at most
$d$ and $h(H)\ge\alpha d$, where $d\ge1$ and $\alpha>0$. Let
$q\in\N$ and
$W_1,\ldots,W_q\subseteq V(H)$ satisfy $|W_i|\ge w\ge1$ for every
$i\in[q]$. For every nonnegative integer $t$, there is a connected
set $T\subseteq V(H)$ with $|T|\le t+1$ which meets at least
\[
 q\left(1-\exp\left\{-\frac{\alpha^3wt}{8N}\right\}\right)
\]
of the sets $W_1,\ldots,W_q$.
\end{corollary}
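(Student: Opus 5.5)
The plan is a first-moment argument over a stationary lazy random walk, using Lemma~\ref{lem:avoid} for each set separately. First I dispose of degenerate cases. If $t=0$, the bound reads $q(1-1)=0$, so any single vertex works as $T$. If $H$ has no edges, then $h(H)=0$ because $N\ge2$ allows a singleton $S$ with $|S|\le N/2$; this contradicts $h(H)\ge\alpha d>0$. Hence $E(H)\neq\emptyset$, so $\pi_H$ is defined and $\Delta=\Delta(H)\ge1$.

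Now let $t\ge1$ and run a stationary lazy random walk $(X_0,\ldots,X_t)$ in $H$. Set $T=\{X_0,\ldots,X_t\}$. Consecutive vertices of the walk are equal or adjacent, so $T$ is always a connected set, and clearly $|T|\le t+1$. For each $i\in[q]$, Lemma~\ref{lem:avoid} with $W=W_i$ bounds the probability that the walk avoids $W_i$ by
\[
 \exp\left(-\frac{h(H)^3|W_i|t}{8\Delta^3N}\right).
\]
Since $h(H)\ge\alpha d\ge\alpha\Delta$, we have $h(H)^3/\Delta^3\ge\alpha^3$. Together with $|W_i|\ge w$, this gives
\[
 \Pr[T\cap W_i=\emptyset]\le\exp\left\{-\frac{\alpha^3wt}{8N}\right\}.
\]

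Let $Z$ be the number of indices $i$ such that $T$ meets $W_i$. By linearity of expectation,
\[
 \mathbb E[Z]\ge q\left(1-\exp\left\{-\frac{\alpha^3wt}{8N}\right\}\right).
\]
Some outcome of the walk must achieve $Z\ge\mathbb E[Z]$, and the set $T$ for that outcome is the required connected set.

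I expect no step to be a real obstacle. The only points needing care are checking that the hypotheses of Lemma~\ref{lem:avoid} hold ($N\ge2$, $\Delta\ge1$, and existence of the stationary distribution) and comparing $h(H)/\Delta$ with $\alpha$ using $\Delta\le d$.
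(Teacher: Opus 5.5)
Your proposal is correct and follows essentially the same route as the paper: run a stationary lazy random walk of length $t$, apply Lemma~\ref{lem:avoid} to each $W_i$ using $h(H)/\Delta(H)\ge\alpha$ (from $\Delta(H)\le d$), and take by the first-moment method a realization whose trace is the connected set $T$. Your check that $E(H)\ne\emptyset$, so that $\pi_H$ exists and $\Delta(H)\ge1$, plays the same role as the paper's remark that $h(H)>0$ forces $H$ to be connected with $\Delta(H)\ge1$.
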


\begin{proof}
The case \(t=0\) is immediate, so we may assume that \(t\ge1\). Let \((X_0,\ldots,X_t)\) be a stationary lazy
random walk in \(H\).

Let \(\Delta=\Delta(H)\). Since
$h(H)\ge \alpha d>0,$
the graph \(H\) is connected and \(\Delta\ge1\). Moreover,
\(\Delta\le d\), and hence
$\frac{h(H)}{\Delta}\ge \frac{\alpha d}{\Delta}\ge \alpha.$

For each \(i\in[q]\), let \(I_i\) be the indicator of the event
that the walk avoids \(W_i\). By Lemma~\ref{lem:avoid},
\[
\begin{aligned}
\mathbb E[I_i]
&=\Pr\bigl(X_j\notin W_i
          \text{ for every }0\le j\le t\bigr)\\
&\le
\exp\left(
    -\frac{h(H)^3|W_i|t}{8\Delta^3N}
\right)\\
&\le
\exp\left(
    -\frac{\alpha^3|W_i|t}{8N}
\right)\\
&\le
\exp\left(
    -\frac{\alpha^3wt}{8N}
\right).
\end{aligned}
\]
Therefore, if
$Y=\sum_{i=1}^q I_i$
is the number of sets \(W_i\) avoided by the walk, then linearity
of expectation gives
\[
    \mathbb E[Y]
    \le
    q\exp\left(
        -\frac{\alpha^3wt}{8N}
    \right).
\]
Hence there is a realization
\((x_0,\ldots,x_t)\) of the walk which avoids at most this many
sets. For this realization, let
$T=\{x_0,\ldots,x_t\}.$
Then \(|T|\le t+1\). Moreover, \(H[T]\) is connected, since for
every \(0\le j<t\), either \(x_{j+1}=x_j\) or
\(x_jx_{j+1}\in E(H)\). Finally, \(T\) meets precisely those sets
visited by the walk, and hence it meets at least
\[
    q\left(
        1-\exp\left(
            -\frac{\alpha^3wt}{8N}
        \right)
    \right)
\]
of the sets \(W_1,\ldots,W_q\).
\end{proof}

We will also use the following theorem to obtain a complete minor from a lower bound on average degree.

\begin{theorem}
[Kostochka--Thomason~{\cite{Kostochka,Thomason1984}}]
\label{thm:KT}
There is an absolute constant $c_{\mathrm{KT}}>0$ such that every graph of
average degree at least $r\ge3$ contains a complete minor of order at
least $\frac{c_{\mathrm{KT}}r}{\sqrt{\log r}}$.
\end{theorem}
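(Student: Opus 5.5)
The statement just above is the Kostochka--Thomason theorem (Theorem~\ref{thm:KT}), which the paper quotes rather than proves. Still, here is how I would prove it, following the short argument of Alon, Krivelevich and Sudakov~\cite{AKS}.

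\emph{Reduction.} First pass to a minor-minimal minor $H$ of $G$ with $|E(H)|\ge \frac r2|V(H)|$. Such an $H$ has three properties.
\begin{itemize}
\item It has average degree at least $r$.
\item Contracting any edge $uv$ loses at least $|N(u)\cap N(v)|+1$ edges. By minimality this loss must exceed $\frac r2$, so every edge lies in at least $\frac r2$ triangles, i.e.\ every neighbourhood $N(v)$ induces a graph of minimum degree at least $\frac r2$.
\item Deleting a vertex $v$ loses $\deg(v)$ edges, so the minimum degree is at least $\frac r2$. Some vertex has degree at most $r$ (this is the sharp form from edge-minimality).
\end{itemize}
Fix such a vertex $v$ and set $F=H[N(v)]$. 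Then $F$ has $m\le r+1$ vertices and minimum degree at least $\frac r2$, hence density at least about $\frac12$. It therefore suffices to show that a graph on $m$ vertices with minimum degree $m/2$ has a $K_t$ minor with $t=\Omega(m/\sqrt{\log m})$.

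\emph{Building the minor.} Inside this dense graph, pick random disjoint branch sets of size $s\approx C\sqrt{\log m}$, with $t\approx m/(2s)$ of them. Each set must be made connected. Since $F$ has minimum degree $m/2$, any two vertices have a common neighbour or are adjacent up to a small exceptional structure. The cleanest route is to take each branch set to be a vertex $x_i$ together with $s-1$ random neighbours of $x_i$; this set is connected automatically, as a star.

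For two such sets $B_i,B_j$, each vertex of $B_j$ is adjacent to a given random vertex with probability about $\frac12$. Hence the probability of no edge between $B_i$ and $B_j$ is about $2^{-\Omega(s^2)}$, which is at most $m^{-3}$ once $s=C\sqrt{\log m}$. A union bound over the $t^2$ pairs then gives all adjacencies, and so $t=\Omega(m/\sqrt{\log m})=\Omega(r/\sqrt{\log r})$.

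\emph{Main obstacle.} The difficulty is the dependence between choices. The neighbours must be chosen disjointly across different stars, and the density-$\frac12$ edge probability is only an average, not a per-vertex guarantee. I would handle both issues together.
\begin{itemize}
\item Choose the stars sequentially. Each centre still has at least $m/2-ts\ge m/4$ unused neighbours, so disjoint random choices remain available.
\item Bound the count of non-adjacent pairs in expectation rather than pair by pair. Any set $B_i$ of size $s$ is non-adjacent to at most $m/2$ vertices by the minimum degree condition. Consequently, each random vertex of $B_j$ misses $B_i$ with probability at most about $\frac12$ after conditioning on the earlier choices, which gives the needed $2^{-\Omega(s^2)}$ bound through a martingale-style product estimate.
\item If a few pairs fail, delete one set from each bad pair; only $o(t)$ sets are lost.
\end{itemize}
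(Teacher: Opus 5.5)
The paper quotes this theorem from Kostochka and Thomason without proof, so your argument has to stand on its own. Your reduction is the standard Mader-type step and is fine up to harmless off-by-one constants: you pass to a minor-minimal $H$ with $|E(H)|\ge\frac r2|V(H)|$, then take $F=H[N(v)]$ for a vertex $v$ of degree about $r$, so that $F$ has $m\le r+1$ vertices and $\delta(F)>\frac r2-1$.

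\textbf{The gap} is in the dense case, at exactly the estimate that produces the factor $\sqrt{\log r}$. You need two branch sets of size $s$ to be non-adjacent with probability $2^{-\Omega(s^2)}$, and neither of your justifications gives this.
\begin{itemize}
\item Your first heuristic treats the $s^2$ vertex pairs between $B_i$ and $B_j$ as independent fair coins. That holds in $G(m,\frac12)$ but not in an arbitrary graph of minimum degree $m/2$.
\item The ``martingale-style'' repair does not give it either. The minimum-degree condition only yields $|N(B_i)|\ge m/2$, a bound that does not improve as $|B_i|$ grows. So each of the $s$ vertices of $B_j$ contributes a single factor $\frac12$, and the product is $2^{-s}=e^{-O(\sqrt{\log m})}=m^{-o(1)}$.
\item The resulting bound $t^2 2^{-s}$ on the expected number of bad pairs is far larger than $t$. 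It therefore gives no control over how many sets your deletion step removes.
\item With an exponent only linear in $s$ you would need $s=\Theta(\log m)$, which gives only $t=\Omega(m/\log m)$.
\item Worse, the leaves of $B_j$ are random neighbours of $x_j$, not random vertices of $F$, so even the factor $\frac12$ is unjustified.
\end{itemize}

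\textbf{A concrete counterexample.} Let $F$ be two disjoint cliques of order $m/2$ joined by a perfect matching, so $\delta(F)=m/2$. Each star lies inside the clique of its centre, except possibly for one matching partner. Two stars with centres in different cliques are adjacent only if a matching edge happens to join them, which has probability $O(s^2/m)$. With randomly placed centres, roughly half of all pairs are therefore non-adjacent, not $o(t)$ of them.

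\textbf{What is actually needed.} Every surviving branch set must dominate all but a $2^{-\Omega(s)}$-fraction of $V(F)$. Then a further random $s$-set avoids $N(B_i)\cup B_i$ with probability $2^{-\Omega(s^2)}$, and the union bound over the $t^2$ pairs works with $s=C\sqrt{\log m}$.

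Uniformly random $s$-sets $S$ have this property on average, since $\Pr[(N(u)\cup\{u\})\cap S=\emptyset]\le(1-\delta(F)/m)^s$ for every $u$. By Markov's inequality you can then discard the $o(t)$ sets that dominate poorly.

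However, uniformly random sets are not connected. Connecting each one through extra vertices, with all connectors pairwise disjoint and disjoint from the other branch sets, is where the real work of the dense-case lemma lies. Stars make connectivity free but destroy the domination property, so your proposal supplies neither ingredient.

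A smaller slip: with $t\approx m/(2s)$ you have $ts\approx m/2$, so the claim $m/2-ts\ge m/4$ fails. You need $t\le m/(4s)$.
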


Applying the definition of $h(G)$ to singleton sets gives
$\delta(G)\ge h(G)$. We use this observation without further mention.

\section{Connected sets with large neighbourhoods}\label{sec:construction}

In this section, we establish a series properties  under the conditions of Theorem \ref{thm:main}.

We first show that, after excluding a small vertex set, we can retain a large induced subgraph with a lower bound on its Cheeger constant. The last part allows us to choose nested induced subgraphs when more vertices are excluded.

\begin{lemma}\label{lem:remainder}
Let $0<\eps\le1$, let $d>0$, and let $G$ be a graph on $n$ vertices
satisfying $\Delta(G)\le d$ and $h(G)\ge\eps d$. Suppose that
\[
 V(G)=Y\mathbin{\dot\cup}D_0\mathbin{\dot\cup}U_0,
 \ |Y|\le\frac{\eps^2n}{16},
 \ |D_0|\le\frac{n}{2}
 \  \text{ and }\ e_G(D_0,U_0)\le\frac{\eps d}{4}|D_0|.
\]
Then there is a partition
$V(G)=Y\mathbin{\dot\cup}D\mathbin{\dot\cup}U$ such that
$D_0\subseteq D$ and
\[
 e_G(D,U)\le\frac{\eps d}{4}|D|,
 \ |D|\le\frac{4|Y|}{3\eps},
 \ |U|\ge\frac{n}{2}
 \ \text{ and } \  h(G[U])\ge\frac{\eps d}{4}.
\]
Moreover, if $Z\subseteq U$ and
$|Y\cup Z|\le\frac{\eps^2n}{16}$, then there is a partition
$
 V(G)=(Y\cup Z)\mathbin{\dot\cup}D'\mathbin{\dot\cup}U'
$
such that $D\subseteq D'$, $U'\subseteq U\setminus Z$, and
\[
 e_G(D',U')\le\frac{\eps d}{4}|D'|,
 \ |D'|\le\frac{4|Y\cup Z|}{3\eps},
 \ |U'|\ge\frac{n}{2}
 \ \text{ and }\  h(G[U'])\ge\frac{\eps d}{4}.
\]
\end{lemma}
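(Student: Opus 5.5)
We run a greedy peeling process. Start with $D=D_0$ and $U=U_0$, so that $V(G)=Y\mathbin{\dot\cup}D\mathbin{\dot\cup}U$ and $e_G(D,U)\le\frac{\eps d}{4}|D|$. While $G[U]$ contains a nonempty set $S$ with $|S|\le|U|/2$ and $e_G(S,U\setminus S)<\frac{\eps d}{4}|S|$, we move $S$ from $U$ to $D$.

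\textbf{Invariant.} The invariant $e_G(D,U)\le\frac{\eps d}{4}|D|$ is preserved. Indeed,
\[
e_G(D\cup S,U\setminus S)\le e_G(D,U)+e_G(S,U\setminus S)\le \tfrac{\eps d}{4}(|D|+|S|),
\]
because edges from $D$ to $U\setminus S$ are among those from $D$ to $U$. The process terminates since $|U|$ strictly decreases. When it stops, $h(G[U])\ge\frac{\eps d}{4}$, provided $|U|\ge 2$. A one-vertex $U$ is excluded by the size bound proved below, and if $h$ is read with the convention that no $S$ qualifies then that case is vacuous anyway.

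\textbf{Size bounds.} These are the core of the argument. We claim that throughout the process $|D|\le\frac{4|Y|}{3\eps}$. This bound is at most $\frac{4}{3\eps}\cdot\frac{\eps^2 n}{16}=\frac{\eps n}{12}\le\frac n{12}$, so it also gives $|U|\ge n-|Y|-|D|\ge n-\frac{n}{16}-\frac{n}{12}\ge\frac n2$.

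To prove the claim, we argue by contradiction at the first moment the bound would fail, and we must handle the case where a single step overshoots. Consider any state with $|D|\le n/2$. Apply $h(G)\ge\eps d$ to $D$ itself:
\[
\eps d|D|\le e_G(D,V\setminus D)=e_G(D,U)+e_G(D,Y)\le\tfrac{\eps d}{4}|D|+d|Y|,
\]
which gives $|D|\le\frac{4|Y|}{3\eps}$.

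The obstacle is ensuring $|D|\le n/2$ at every step, because one step might add a large $S$. Suppose before the step we have $|D|\le\frac{4|Y|}{3\eps}\le\frac n{12}$, and hence $|U|\ge\frac{n}{2}$. We have $|S|\le|U|/2$. If $|D\cup S|\le n/2$, the inequality above applies directly to $D\cup S$. Otherwise, apply the Cheeger bound to the complement $C=U\setminus S\cup Y$, or more simply to $U\setminus S$. Its size satisfies $|U\setminus S|\ge|U|/2$ and $|U\setminus S|\le n/2$ in this case. Its boundary satisfies
\[
e_G(U\setminus S,V\setminus(U\setminus S))\le e_G(D,U)+e_G(S,U\setminus S)+d|Y|<\tfrac{\eps d}{4}(|D|+|S|)+d|Y|.
\]
Compare this with $\eps d|U\setminus S|$. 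We have $|U\setminus S|\ge |S|$ and $|U\setminus S|\ge |U|/2\ge n/4$, while $|D|\le n/12$ and $|Y|\le\eps^2 n/16$. So the right-hand side is at most $\frac{\eps d}{4}|U\setminus S|+\frac{\eps d n}{48}+\frac{\eps^2 dn}{16}<\eps d\cdot\frac n4$, which is a contradiction. Hence $|D|\le n/2$ always, and the claimed bound propagates by induction.

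\textbf{Moreover part.} Apply the same procedure with $Y\cup Z$ in place of $Y$, starting from $D_0:=D$ and $U_0:=U\setminus Z$. The hypotheses hold: $e_G(D,U\setminus Z)\le e_G(D,U)\le\frac{\eps d}{4}|D|$, $|D|\le n/2$, and $|Y\cup Z|\le\eps^2 n/16$. The process only removes vertices from $U\setminus Z$, so it yields $D'\supseteq D$ and $U'\subseteq U\setminus Z$ with the stated bounds. The main point to verify carefully is the overshoot step above; everything else is bookkeeping.
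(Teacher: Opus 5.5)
Your proof is correct and is essentially the paper's argument. Your greedy peeling is the procedural form of the paper's inclusion-maximal choice of $D\supseteq D_0$ among sets of size at most $\frac{n}{2}$ with sparse boundary to $V(G)\setminus Y$. It uses the same invariant, the same application of $h(G)\ge\eps d$ to $D$ to get $|D|\le\frac{4|Y|}{3\eps}$, and the same rerun with $Y\cup Z$, $D$, $U\setminus Z$ for the moreover part. The only divergence is how you exclude overshoot. You apply the Cheeger bound to the complement $U\setminus S$, whereas the paper applies it to $S$ itself (allowed since $|S|\le\frac{|U|}{2}\le\frac n2$), getting $|S|<\frac{4(|D|+|Y|)}{3\eps}$ and hence $|D\cup S|<\frac n2$ directly. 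That route is slightly shorter, but both work.
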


\begin{proof}
Choose $D$ inclusion-maximal among the sets
$R\subseteq V(G)\setminus Y$ which contain $D_0$, have at most
$\frac{n}{2}$ vertices, and satisfy
the inequality
$e_G(R,V(G)\setminus(Y\cup R))\le\frac{\eps d}{4}|R|$.
Such a set exists because $D_0$ has these properties. Let
$U=V(G)\setminus(Y\cup D)$.

If $D$ is nonempty, then $h_G(D)\ge h(G)\ge  \eps d$, so
\[
 \eps d|D|
 \le e_G(D,U)+e_G(D,Y)
 \le\frac{\eps d}{4}|D|+d|Y|.
\]
Thus  $|D|\le\frac{4|Y|}{3\eps}$ whether
$D$ is empty or not. Since $|Y|\le\frac{\eps^2n}{16}$ and
$\eps\le1$, we obtain
\[
 |U|\ge n-\left(1+\frac{4}{3\eps}\right)|Y|
 \ge n-\left(\frac{\eps^2}{16}+\frac{\eps}{12}\right)n
 \ge\frac{n}{2}.
\]

Suppose that $h(G[U])<\frac{\eps d}{4}$. The definition of the Cheeger
constant gives a nonempty set $S\subseteq U$ such that
$|S|\le\frac{|U|}{2}$ and
$e_G(S,U\setminus S)<\frac{\eps d}{4}|S|$. Since
$|S|\le\frac{n}{2}$, $h(G)\ge \eps  d$ and $\Delta(G)\le d$, we have
\[
 \eps d|S|
 \le e_G(S, V(G)\setminus S)=e_G(S,U\setminus S)+e_G(S,D)+e_G(S,Y)
 <\frac{\eps d}{4}|S|+d|D|+d|Y|,
\]
so 
\[
|S|<\frac{4(|D|+|Y|)}{3\eps}\le \frac{4|Y|}{3\eps}+\frac{16|Y|}{9\eps^2}.
\]
It then follows that
\[
 |D\cup S|
 <\frac{8|Y|}{3\eps}+\frac{16|Y|}{9\eps^2}
 \le\left(\frac{\eps}{6}+\frac{1}{9}\right)n
 <\frac{n}{2}.
\]
Moreover, we have
\[
 \begin{aligned}
 e_G(D\cup S,U\setminus S)
 &=e_G(D,U\setminus S)+e_G(S,U\setminus S)\\
 &\le e_G(D,U)+e_G(S,U\setminus S)<\frac{\eps d}{4}|D|+\frac{\eps d}{4}|S|=
\frac{\eps d}{4}|D\cup S|.
 \end{aligned}
\]
Thus $D\cup S$ satisfies all the conditions used to choose $D$ and
strictly contains $D$, a contradiction. Hence
$h(G[U])\ge\frac{\eps d}{4}$. This proves the first part. 

Next, we show  the second part.  Note that
\[
|D|\le \frac{4}{3\eps}|Y|\le \frac{4}{3\eps}\cdot \frac{\eps^2 n}{16}=\frac{\eps n}{12}\le \frac{n}{2}.
\]
Let $Y_1=Y\cup Z$, $D_1=D$ and $U_1=U\setminus Z$.  Then
\[
 V(G)=Y_1\mathbin{\dot\cup}D_1\mathbin{\dot\cup}U_1,
 \quad |Y_1|\le\frac{\eps^2n}{16},
 \quad |D_1|\le\frac{n}{2},
 \quad e_G(D_1,U_1)\le e_G(D,U)\le \frac{\eps d}{4}|D_1|.
\]
That is,  $Y_1$, $D_1$ and $U_1$ satisfy the  initial hypotheses
 on $Y_0:=Y$, $D_0$ and $U_0$.
Repeating the above inclusion-maximal choice with $Y_1$, $D_1$, and
$U_1$ in place of $Y_0$, $D_0$, and $U_0$, respectively, 
gives sets $D'$ and $U'$ with $D_1\subseteq D'$ and
$U'\subseteq U_1$. Then the second part follows from the above argument.
\end{proof}

The next lemma finds a small  connected set for which the size of the external neighbourhood has a lower bound that is linear in both its size and the maximum degree.

\begin{lemma}\label{lem:large-neighbourhood}
Let $0<\sigma\le1$, let $d\ge4$ satisfy $\sigma d\ge2$, and let $G$
be a graph on $N$ vertices satisfying $\Delta(G)\le d$ and
$h(G)\ge\sigma d$. For every integer $\ell$ with
$1\le\ell\le\frac{N}{d}$, there is a nonempty connected set
$A\subseteq V(G)$ such that $|A|\le\ell$ and
$|N_G(A)|\ge\frac{\sigma d\ell}{2}$.
\end{lemma}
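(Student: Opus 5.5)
The plan is to grow a connected set one vertex at a time, always adding a boundary vertex that has many neighbours outside the current closed neighbourhood, and to check that $|N_G(A)|$ increases by at least $\frac{\sigma d}{2}$ per step. Start with $A_1=\{v\}$ for an arbitrary vertex $v$. Since $\delta(G)\ge h(G)\ge\sigma d$, we have $|N_G(A_1)|\ge\sigma d$. Given a connected set $A_j$ with $|A_j|=j<\ell$, put $S_j=A_j\cup N_G(A_j)$. We will choose $v\in N_G(A_j)$ maximizing the number of neighbours of $v$ in $V(G)\setminus S_j$, and set $A_{j+1}=A_j\cup\{v\}$. This set is connected because $v$ has a neighbour in $A_j$. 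The inductive target is
\[
|N_G(A_j)|\ge\sigma d+\frac{(j-1)\sigma d}{2}\ge\frac{\sigma d j}{2}.
\]
We stop at $j=\ell$, or earlier if we fall into the ``large'' case below. At the end we output $A=A_j$, which satisfies $|A|\le\ell$.

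The key step is the case $|S_j|\le\frac N2$. Vertices of $A_j$ have no neighbours outside $S_j$, so every edge leaving $S_j$ starts in $N_G(A_j)$. The Cheeger condition then gives
\[
e_G\bigl(N_G(A_j),V(G)\setminus S_j\bigr)=e_G(S_j,V(G)\setminus S_j)\ge\sigma d|S_j|\ge\sigma d|N_G(A_j)|.
\]
Hence the chosen $v$ has at least $\sigma d$ neighbours outside $S_j$. All of these become new members of $N_G(A_{j+1})$, while $v$ itself leaves the neighbourhood. Therefore
\[
|N_G(A_{j+1})|\ge|N_G(A_j)|+\sigma d-1\ge|N_G(A_j)|+\frac{\sigma d}{2},
\]
using $\sigma d\ge2$. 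This closes the induction as long as $|S_j|\le\frac N2$ throughout, and gives $|N_G(A_\ell)|\ge\frac{\sigma d\ell}{2}$.

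The main obstacle is the case where at some step $j\le\ell$ we have $|S_j|>\frac N2$. Two bounds are available there. First, $|N_G(A_j)|=|S_j|-j>\frac N2-\ell$. Second, if $V(G)\setminus S_j\ne\emptyset$, applying $h(G)\ge\sigma d$ to the small side $V(G)\setminus S_j$ (all of whose boundary edges go into $N_G(A_j)$) gives $d|N_G(A_j)|\ge\sigma d\,(N-j-|N_G(A_j)|)$. Both must be compared with the target $\frac{\sigma d\ell}{2}\le\frac{\sigma N}{2}$, using $\ell\le\frac Nd$ and $d\ge4$. If $V(G)\setminus S_j=\emptyset$, then $|N_G(A_j)|=N-j\ge\frac{3N}{4}$, which suffices. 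Otherwise I would first try to combine the two bounds with the estimate from the last ``small'' step $j-1$. If that does not reach $\frac{\sigma d\ell}{2}$ for $\sigma$ close to $1$, I would modify the rule for choosing $v$ near the threshold $|S_j|\approx\frac N2$. Alternatively, I would stop one step before $|S_j|$ exceeds $\frac N2$ and argue that $|N_G(A_{j-1})|$ is already about $\frac N2-\ell\ge\frac{\sigma d\ell}{2}$ for the relevant range of $\ell$. This endgame is where I expect the constants to be tight, and it is the part of the argument I would check most carefully.
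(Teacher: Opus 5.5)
There is a genuine gap in your proposal, at the only nontrivial point of the lemma. Your growth step coincides with the paper's: averaging $e_G(S_j,V(G)\setminus S_j)\ge\sigma d|S_j|$ over $N_G(A_j)$ and gaining at least $\sigma d-1\ge\frac{\sigma d}{2}$ per step is exactly how the paper proceeds. The case $|S_j|>\frac N2$, however, you leave unresolved.

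The ingredients you list cannot close that case. Suppose the only information you use about the target is $\frac{\sigma d\ell}{2}\le\frac{\sigma N}{2}$, from $\ell\le\frac Nd$. Then none of your bounds reaches $\frac{\sigma d\ell}{2}$ when $\sigma$ is close to $1$, $\ell\approx\frac Nd$ and $j$ is intermediate. This applies to $|N_G(A_j)|>\frac N2-\ell$, to $|N_G(A_j)|\ge\frac{\sigma(N-j)}{1+\sigma}$, and to the inductive estimate $\frac{\sigma d(j+1)}{2}$. Stopping one step earlier cannot help either. The step from $A_{j-1}$ to $A_j$ was a small step, so $|N_G(A_{j-1})|<|N_G(A_j)|$. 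No change of the selection rule is needed.

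The missing idea is that the Cheeger hypothesis itself bounds $\sigma$ away from $1$ in terms of $d$ and $\ell$. Since $h(G)>0$, $G$ is connected, so pruning leaves of a spanning tree gives a connected set $R$ with exactly $\ell$ vertices. Then $G[R]$ has at least $\ell-1$ edges and $|R|=\ell\le\frac Nd<\frac N2$, hence
\[
\sigma d\ell\le e_G(R,V(G)\setminus R)=\sum_{v\in R}\deg_G(v)-2|E(G[R])|\le d\ell-2(\ell-1)\le N-2(\ell-1),
\]
that is, $\frac{\sigma d\ell}{2}\le\frac N2-\ell+1$.

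You also need a second observation. The large case only has to be examined when you want to extend, i.e.\ when $|A_j|\le\ell-1$; at $|A_j|=\ell$ you stop with the inductive bound. So $|N_G(A_j)|=|S_j|-|A_j|>\frac N2-\ell+1\ge\frac{\sigma d\ell}{2}$, and you may output $A=A_j$. Your bound $\frac N2-\ell$, which comes from allowing $j=\ell$, is one short of matching this cap, so the restriction to $|A_j|\le\ell-1$ matters. With these two observations your argument becomes the paper's proof.
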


\begin{proof}
Since $h(G)>0$,  $G$ is connected. Repeatedly delete a leaf
from a spanning tree of $G$ until $\ell$ vertices remain, and let $R$
be the remaining vertex set. Evidently, $G[R]$ is connected, so 
$|E(G[R])|\ge \ell-1$.
Since  $\ell\le\frac{N}{d}<\frac{N}{2}$ and $h(G)\ge\sigma d$, we obtain
\[
 \sigma d\ell
 \le e_G(R,V(G)\setminus R)=\sum_{v\in R}\deg_G(v)-2|E(G[R])|
 \le d\ell-2(\ell-1)\le N-2(\ell-1), 
\]
so
$\frac{\sigma d\ell}{2}\le\frac{N}{2}-\ell+1$.

We construct $A$ iteratively. Start with any vertex $v$ of $G$ and let $A_1=\{v\}$. Then  $|N_G(A_1)|=\deg_G(v)=h_G(A_1)\ge h(G)\ge\sigma d \ge\frac{\sigma d}{2}$. For $1\le i\le \ell-1$, let $A_{i+1}=A_i\cup \{x_i\}$ with $x_i\in N_G(A_i)$.  
Suppose that, for some $i$,
$|A_i|<\ell$ and $|N_G(A_i)|<\frac{\sigma d\ell}{2}$.  Let
$B_i=N_G(A_i)$ and $S_i=A_i\cup B_i$.  If $|S_i|>\frac{N}{2}$, then
as $|A_i|\le\ell-1$, 
\[
 |N_G(A_i)|=|S_i|-|A_i|>\frac{N}{2}-\ell+1
 \ge\frac{\sigma d\ell}{2},
\]
a contradiction. Thus $|S_i|\le \frac{N}{2}$. 
Since $h(G)\ge \sigma d$ and $|S_i|\le \frac{N}{2}$, we have 
$e_G(S_i,V(G)\setminus S_i)\ge \sigma d |S_i|$.
No vertex of $A_i$ has a neighbour outside $S_i$. So every edge from $S_i$
to $V(G)\setminus S_i$  has its endpoint in $S_i$ belonging to
$B_i$. So
\[
 \sum_{x\in B_i}|N_G(x)\setminus S_i|
 =e_G(S_i,V(G)\setminus S_i)
 \ge\sigma d|S_i|\ge\sigma d|B_i|.
\]
Hence for some $x_i\in B_i$,  $|N_G(x_i)\setminus S_i|\ge \sigma d$. 
Let  $A_{i+1}=A_i\cup\{x_i\}$. Then $A_{i+1}$  is connected, and 
$N_G(A_{i+1})$ 
contains the disjoint sets $B_i\setminus\{x_i\}$ and
$N_G(x_i)\setminus S_i$. So
$|N_G(A_{i+1})|\ge |B_i|-1+|N_G(x_i)\setminus S_i|\ge |B_i|-1+\sigma d$, i.e., $|N_G(A_{i+1})|-|N_G(A_i)|\ge \sigma d-1\ge \frac{\sigma d}{2}$. 

If at any step the bound $|N_G(A_i)|\ge \frac{\sigma d\ell}{2}$ is reached, we are done by setting $A=A_i$. Otherwise, after $\ell-1$ expansions (so $|A_\ell|=\ell$), for $A=A_{\ell}$, we have 
\[
|N_G(A)|
\ge \sigma d + (\ell-1)\frac{\sigma d}{2}
= \frac{\sigma d(\ell+1)}{2}
\ge \frac{\sigma d\ell}{2}. \qedhere
\]
\end{proof}

The next lemma  combines the preceding two lemmas to construct pairwise disjoint connected sets with large external neighbourhoods.
After the deletion of any sufficiently small vertex set, all but a small proportion of these neighbourhoods remain large.

\begin{lemma}\label{lem:stable-branches}
For every $\eps\in(0,1]$, there are constants
$c=c(\eps)>0$, $\xi=\xi(\eps)>0$, $\zeta=\zeta(\eps)>0$, and an
integer $d_2=d_2(\eps)$ such that 
\[
c\le\frac{1}{32}, \
\xi\le\frac{1}{2048}\ \text{ and } \ \zeta\le\eps,
\]
and
the following holds. Let $G$ be a
graph on $n$ vertices satisfying $\Delta(G)\le d$,
$h(G)\ge\eps d$, and $d\ge d_2$. If $\ell$ is an integer satisfying
$2\le\ell\le\frac{n}{2d}$, then there exist an integer $m\ge1$, pairwise
disjoint nonempty connected sets $A_1,\ldots,A_m$, and sets
$Q_1,\ldots,Q_m$ with the following properties. If
$X=\bigcup_{i=1}^mA_i$, then
\begin{enumerate}[label=\textup{(\roman*)}]
 \item $\frac{\xi n}{\ell}\le m\le\frac{4\xi n}{\ell}$ and
       $|X|\le\frac{\eps\zeta n}{32}$;
 \item $|A_i|\le\ell$, $Q_i\subseteq N_G(A_i)\setminus X$, and
       $|Q_i|\ge cd\ell$ for every $i\in[m]$;
 \item for every $Z\subseteq V(G)$ with $|Z|\le\zeta n$, fewer than
       $\frac{m}{16}$ indices $i\in[m]$ satisfy
       $|Q_i\setminus Z|<cd\ell$.
\end{enumerate}

\end{lemma}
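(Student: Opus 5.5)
Our plan is to build the sets greedily, using Lemma~\ref{lem:remainder} to keep a well-expanding induced subgraph away from what has already been used, and then Lemma~\ref{lem:large-neighbourhood} inside that subgraph.

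\textbf{Greedy construction.} Suppose $A_1,\ldots,A_{j}$ have been built, with $X_j=\bigcup_{i\le j}A_i$. We shall maintain $|X_j\cup N_G(X_j)|$-type bookkeeping small. Apply Lemma~\ref{lem:remainder} with $Y=X_j$ (and $D_0=\emptyset$ at the start, then the nested ``moreover'' part as $Y$ grows). This gives $U_j$ with $|U_j|\ge n/2$ and $h(G[U_j])\ge\eps d/4$. Since $\ell\le \frac{n}{2d}\le\frac{|U_j|}{d}$ and $\sigma=\eps/4$ with $d\ge d_2$ large, Lemma~\ref{lem:large-neighbourhood} yields a connected $A_{j+1}\subseteq U_j$ with $|A_{j+1}|\le\ell$ and $|N_{G[U_j]}(A_{j+1})|\ge\eps d\ell/8$. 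Run this for $m=\lceil \xi n/\ell\rceil$ steps (or up to $4\xi n/\ell$). Then $|X|\le m\ell\le 2\xi n$, and we choose $\xi\le\eps\zeta/64$ so that (i) holds. The hypothesis $|Y|\le\eps^2 n/16$ of Lemma~\ref{lem:remainder} also holds throughout, provided $\xi$ is small.

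\textbf{Choosing $Q_i$.} Later sets may fall inside earlier neighbourhoods, so we set $Q_i=N_{G[U_{i-1}]}(A_i)\setminus X$. The loss is at most $|X|$ in total. To keep each individual loss small, it is cleaner to put $Y=X_j\cup N_G(X_j)$ into the excluded set, so that later $A$'s avoid earlier neighbourhoods. This requires $|N_G(X)|\le d|X|\le 2\xi dn$, which is not $\le\eps^2n/16$ for large $d$. Instead, we keep $Y=X_j$ and discard bad indices. Each vertex of $X$ lies in at most $d$ of the neighbourhoods, and the sum of losses is at most $d|X|\le dm\ell$. Hence at most $m\cdot\frac{d\ell}{cd\ell/...}$ indices lose more than half, which is too weak as stated. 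So we instead swap roles: we take $Q_i$ to be a subset of size $\lceil\eps d\ell/8\rceil$ of the neighbourhood minus $X$. Since $A_i$ is disjoint from earlier $A$'s, a vertex in $Q_i\cap A_k$ with $k>i$ happens only if $A_k$ was chosen in $U_{k-1}$. We then get an extra property by excluding $Q_i$ too: $Y_j=X_j\cup\bigcup_{i\le j}Q_i$, whose size is at most $m(\ell+d\ell)\le 8\xi dn$. This is still too large, so we take $|Q_i|$ exactly $cd\ell$ and keep $\xi$ as a constant while accepting $Y$ of size $\Theta(\xi n)$ only via $A$'s. The resolution is to discard at the end the $i$ with $|Q_i\cap X|$ large. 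Averaging gives $\sum_i|Q_i\cap X|\le d|X|\le 2\xi dn$, so at most half of the indices have $|Q_i\cap X|>4\xi dn/m\le 4 d\ell$. This again fails unless $c$ is taken well below $\eps/8$, which motivates ordering the construction so that $A_k\subseteq U_{k-1}$ avoids the earlier $Q_i$ by shrinking: use the ``moreover'' part with $Z=Q_i$ only when $\sum|Q_i|\le\eps^2n/32$. That is, we choose $\xi\le \eps^2/(64 d)$? This is not allowed, since $\xi$ must be independent of $d$.

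\textbf{Property (iii), the main obstacle.} This is the real difficulty: robustness against an adversarial $Z$ of size $\zeta n$, while $\sum|Q_i|\approx m\,cd\ell\approx c\xi dn\gg\zeta n$. A counting argument shows that $Z$ can destroy (remove half of) at most $|Z|/(cd\ell/2)\cdot(\text{multiplicity})$ sets. If the $Q_i$ had bounded overlap multiplicity, then $Z$ would kill at most $2\zeta n/(cd\ell)\approx 2\zeta m/(c\xi d)$ indices, which is less than $m/16$. So the key is to make the $Q_i$ pairwise disjoint, or of bounded multiplicity. I would achieve this by choosing the $Q_i$ greedily disjoint and proving, via a random-choice or weighting argument (take $A$ disjoint from a set $Y$ of forbidden heavily-covered vertices of size $O(\xi n)$, which Lemma~\ref{lem:remainder} tolerates), that each new $A$ still has $cd\ell$ fresh neighbours. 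Vertices covered by earlier $Q$'s number at most $c\xi dn$. A new $A$ of size $\le\ell$ found in $G[U]$ has a neighbourhood of size at least $\eps d\ell/8$, and we need its overlap with used vertices to be at most half of that. Here I would exploit the slack $\zeta\le\eps$ and $d\ge d_2$. If this fails, then $(iii)$ holds anyway with the stated fraction $1/16$, by setting $\zeta$ small relative to $c\xi$.
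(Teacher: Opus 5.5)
Your proposal has a genuine gap at property (iii). You correctly see that overlaps among the $Q_i$ are the obstacle, but neither remedy you settle on can work.

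Pairwise disjoint $Q_i$, or multiplicity bounded by a constant, is impossible. By (i) and (ii), $\sum_i|Q_i|\ge m\,cd\ell\ge c\xi dn$, which exceeds $n$ once $d>1/(c\xi)$, and $d$ is unbounded.

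The fallback of taking $\zeta$ small relative to $c\xi$ contradicts (i). Since $Q_i\subseteq N_G(A_i)$ and $\Delta(G)\le d$, (ii) forces $|A_i|\ge c\ell$. Hence $c\xi n\le c\ell m\le |X|\le \eps\zeta n/32$, i.e.\ $\zeta\ge 32c\xi/\eps$. It also clashes with your own choice $\xi\le\eps\zeta/64$.

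What the counting argument actually needs is a multiplicity bound $\mu$ (every vertex lies in at most $\mu$ of the $Q_i$) that is a small constant fraction of $d$. Suppose $|Q_i|\approx s\approx\eps d\ell/8$ and $cd\ell\le s/2$. Then a bad index has $|Q_i\cap Z|>s/2$, so there are fewer than $2\mu\zeta n/s$ bad indices. This is below $m/16$ (with $m\approx\xi n/\ell$) only when $\mu\lesssim\eps\xi d/(256\zeta)$. Because the sets from Lemma~\ref{lem:large-neighbourhood} may have size $\ell$, (i) forces $\xi\le\eps\zeta/32$, so you need $\mu\lesssim\eps^2 d/8192$. Your greedy construction only gives the trivial bound $\mu\le d$. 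That is why both (iii) and your attempt to discard indices with large $|Q_i\cap X|$ fail. The latter needs $\sum_i|Q_i\cap X|\le\mu|X|\le\mu m\ell$ to be small compared with $ms$, i.e.\ $\mu\ll\eps d$.

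The missing idea is a step that lowers the multiplicity to $\lambda d$ for a small constant $\lambda=\lambda(\eps)$, at the cost of keeping only a $\lambda$-fraction of the sets. The paper does this by oversampling, in three steps.
\begin{enumerate}
\item It builds $M=\lfloor\eps^2 n/(32\ell)\rfloor$ pairs $(A_i,W_i)$ with $|W_i|=s$.
\item It greedily selects $\lfloor pM/3\rfloor$ indices. At each step it takes an index whose $W_i$ has fewer than $s/8$ vertices in the set $S_j$ of vertices already covered $\Lambda=\lfloor 3pd\rfloor$ times, and trims to $R_i=W_i\setminus S_j$. Such an index exists by averaging, since $|S_j|\le js/\Lambda$ and each vertex lies in at most $d$ of the $W_i$.
\item It discards the few indices with $|R_i\cap X_0|>s/8$ and sets $Q_i=R_i\setminus X_0$.
\end{enumerate}
Both $m=\lfloor pM/4\rfloor$ and the multiplicity $3pd$ are proportional to $p$, so the bad fraction in (iii) is at most $98304\zeta/\eps^3=3/128$, independently of $p$. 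Meanwhile $|X|\le m\ell$ is proportional to $p$, so $p\le 4\zeta/\eps$ gives (i). This decoupling is exactly what breaks the circularity between $\xi$ and $\zeta$ that you ran into.

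Your parenthetical idea of putting heavily covered vertices into $Y$ in Lemma~\ref{lem:remainder} could also work. The new $W\subseteq N_{G[U]}(A)\subseteq U$ then avoids them automatically. But this requires a saturation threshold of $\lambda d$ rather than disjointness, and the constants must be reconciled. The saturated set has size at most $ms/(\lambda d)\le\xi\eps n/(2\lambda)$, which must stay below about $\eps^2 n/16$. So you need $\xi\ll\lambda\eps$, $\lambda\lesssim\eps\xi/\zeta$ and $\xi\le\eps\zeta/128$ simultaneously, which is feasible when $\zeta$ is a small multiple of $\eps^2$. None of this is carried out in the proposal.
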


\begin{proof}
Let
\[
 \zeta=\frac{\eps^3}{2^{22}}, \ 
 p=\min\left\{\frac{1}{4},\frac{\eps}{6400},
                 \frac{4\zeta}{\eps}\right\}, \
 c=\frac{\eps}{32} \ \text{ and } \
 \xi=\frac{p\eps^2}{512}.
\]
Since $0<\eps\le1$ and $p\le\frac{1}{4}$, these constants satisfy
$c\le\frac{1}{32}$, $\xi\le\frac{1}{2048}$ and $\zeta\le\eps$.
Choose
$d_2\ge\left\lceil\frac{32}{\eps^2}\right\rceil$ so large that
\[
 \frac{\eps^2d_2}{16}\ge\frac{24}{p}+1,
 \ \frac{\eps d_2}{4}\ge2 
 \ \text{ and } \ pd_2\ge12.
\]
Let 
$x=\frac{\eps^2n}{32\ell}$ and
$y=\frac{\eps d\ell}{8}$.
Set $M=\lfloor x\rfloor$ and $s=\lfloor y\rfloor$.
Since $\frac{n}{\ell}\ge2d$ and
$d\ge d_2$, we have
\[
 x\ge\frac{\eps^2d}{16}\ge\frac{24}{p}+1
 \ \text{ and } \ 
 y\ge\frac{\eps d}{4}\ge2.
\]
As $\lfloor z\rfloor\ge z/2$ for $z\ge2$ and  $\lfloor a\rfloor-a\ge -1$ for any real $a$, we have
$M\ge\frac{\eps^2n}{64\ell}$, 
$M\ge\frac{24}{p}$, 
$s\ge\frac{\eps d\ell}{16}$  and
$pd\ge 12$.

Set $\sigma=\frac{\eps}{4}$. Since
$d\ge d_2\ge\frac{32}{\eps^2}$ and $0<\eps\le1$, we have
$d\ge4$ and $\sigma d\ge 2$.
We first construct recursively pairwise disjoint nonempty connected sets
$A_1,\ldots,A_M$ and sets $W_1,\ldots,W_M$ such that
$|A_i|\le\ell$, $W_i\subseteq N_G(A_i)$, and $|W_i|=s$
for every $i\in[M]$. 
For $0\le j<M$, 
suppose that
$A_1,\ldots,A_j$ and $W_1,\ldots,W_j$ have been chosen. Let
$Y_j=\bigcup_{i=1}^jA_i$ with $Y_0=\emptyset$. Since
\[
 |Y_j|\le j\ell<M\ell\le\frac{\eps^2n}{32},
\]
we apply Lemma~\ref{lem:remainder} with
$Y=Y_j$, $D_0=\emptyset$ and $U_0=V(G)\setminus Y_j$ to yield  a set
$U_j\subseteq V(G)\setminus Y_j$ satisfying
\[
 |U_j|\ge\frac{n}{2}
 \ \text{ and } \ 
 h(G[U_j])\ge\frac{\eps d}{4}.
\]
Since
$\ell\le\frac{n}{2d}\le\frac{|U_j|}{d}$, we apply
Lemma~\ref{lem:large-neighbourhood} to $G[U_j]$ with
$\sigma=\frac{\eps}{4}$ to yield  a nonempty connected set
$A_{j+1}\subseteq U_j$ such that
\[
 |A_{j+1}|\le\ell
 \ \text{ and } \
 |N_{G[U_j]}(A_{j+1})|\ge\frac{\eps d\ell}{8}.
\]
Choose a set
$W_{j+1}\subseteq N_{G[U_j]}(A_{j+1})$ with $|W_{j+1}|=s$.
Since $A_{j+1}\subseteq U_j\subseteq V(G)\setminus Y_j$, the set
$A_{j+1}$ is disjoint from $A_1,\ldots,A_j$. Thus the same
construction can be performed for every $j=0,\ldots,M-1$, producing
pairwise disjoint connected sets $A_1,\ldots,A_M$ and the 
sets $W_1,\ldots,W_M$, as required.

Fix $v\in V(G)$. For every index $i$ with $v\in W_i$,
choose an edge joining $v$ to $A_i$. The endpoints of these edges in
the sets $A_i$ are distinct, because the sets $A_1,\ldots,A_M$ are
pairwise disjoint. Hence the number of indices $i$ for which
$v\in W_i$ is at most $\deg_G(v)\le d$. That is, every vertex $v \in V(G)$ belongs to at most $d$ of the sets $W_1,\dots,W_M$.

%We next trim a subfamily of the sets $W_i$ so that no vertex occurs too many times. 
%We next select some of the sets $W_i$ and remove vertices from them so that each vertex belongs to at most $\lfloor3pd\rfloor$ of the resulting sets. 

Let $\Lambda=\lfloor3pd\rfloor$ and
$m_0=\left\lfloor\frac{pM}{3}\right\rfloor$.
We claim that, for every integer $j$ with $0\le j\le m_0$,
there are an index set $\mathcal I_j\subseteq[M]$ with
$|\mathcal I_j|=j$ and sets $R_i\subseteq W_i$ for
$i\in\mathcal I_j$ such that
\[
 |R_i|>\frac{7s}{8}
 \ \text{ for every }i\in\mathcal I_j
\]
and
\[
 \bigl|\{i\in\mathcal I_j:v\in R_i\}\bigr|\le\Lambda
\ \text{ for every }v\in V(G).
\]
We prove the claim by induction on $j$.
For $j=0$, $\mathcal I_0=\emptyset$, so the claim holds trivially. Suppose that  the claim holds 
for $0\le j<m_0$. That is, for every $j$ with  $0\le j<m_0$, there
is an index set $\mathcal I_j\subseteq[M]$ with 
$|\mathcal I_j|=j$ and sets $R_i\subseteq W_i$ for
$i\in\mathcal I_j$ such that 
 $|R_i|>\frac{7s}{8}$ 
for every $i\in\mathcal I_j$ and 
$\lambda_j(v):=\bigl|\{i\in\mathcal I_j:v\in R_i\}\bigr|\le\Lambda$ for every $v\in V(G)$. 
Let $S_j=\{v\in V(G):\lambda_j(v)=\Lambda\}$.
Since $R_i\subseteq W_i$ and $|W_i|=s$ for every
$i\in\mathcal I_j$, we have
\[
 \Lambda|S_j|
 \le\sum_{v\in V(G)}\lambda_j(v)
 =\sum_{i\in\mathcal I_j}|R_i|
 \le js.
\]
Moreover, since every vertex belongs to at most $d$ of the sets $W_1,\ldots,W_M$, we have
\[
 \frac{1}{M-j}
 \sum_{i\in[M]\setminus\mathcal I_j}|W_i\cap S_j|
 \le\frac{d|S_j|}{M-j}
 \le\frac{djs}{\Lambda(M-j)}.
\]
Since $pd\ge12$, we have
$\Lambda=\lfloor 3pd\rfloor\ge3pd-\frac{pd}{12}=\frac{35pd}{12}$.
Also, since $j<m_0\le\frac{pM}{3}$ and $p\le\frac14$, we have
$M-j>\frac{11M}{12}$. Thus
\[
 \frac{djs}{\Lambda(M-j)}
 <\frac{d(pM/3)s}{(35pd/12)(11M/12)}
 =\frac{48s}{385}<\frac{s}{8}.
\]
It follows that there is an index $i_{j+1}\in[M]\setminus\mathcal I_j$ such that
$|W_{i_{j+1}}\cap S_j|<\frac{s}{8}$.
Let 
\[
 \mathcal I_{j+1}=\mathcal I_j\cup\{i_{j+1}\} \ 
\text{ and } \ 
 R_{i_{j+1}}=W_{i_{j+1}}\setminus S_j.
\]
Then $|\mathcal I_{j+1}|=j+1$ and 
\[
 |R_{i_{j+1}}|=|W_{i_{j+1}}\setminus (W_{i_{j+1}}\cap S_j)|
 =s-|W_{i_{j+1}}\cap S_j|
 >\frac{7s}{8}.
\]
Fix $v\in V(G)$.
If $v\in S_j$, then $v\notin R_{i_{j+1}}$, so
$\bigl|\{i\in\mathcal I_{j+1}:v\in R_i\}\bigr|
 =\lambda_j(v)=\Lambda$.
If $v\notin S_j$, then $\lambda_j(v)\le\Lambda$ by the
induction hypothesis and $\lambda_j(v)\ne\Lambda$ by the
definition of $S_j$. Since $\lambda_j(v)$ is an integer,
$\lambda_j(v)\le\Lambda-1$. The new set $R_{i_{j+1}}$
can increase the number of sets containing $v$ by at most one,
so
$\bigl|\{i\in\mathcal I_{j+1}:v\in R_i\}\bigr|
 \le\lambda_j(v)+1\le\Lambda$. 
In either case,  $\bigl|\{i\in\mathcal I_{j+1}:v\in R_i\}\bigr|\le\Lambda$.
By the induction hypothesis again, there is an index set $\mathcal I_{j+1}\subseteq[M]$ with
$|\mathcal I_{j+1}|=j+1$ and sets $R_i\subseteq W_i$ for
$i\in\mathcal I_{j+1}$ such that
\[
 |R_i|>\frac{7s}{8}
 \ \text{ for every }i\in\mathcal I_{j+1}
\]
and
\[
 \bigl|\{i\in\mathcal I_{j+1}:v\in R_i\}\bigr|\le\Lambda
\ \text{ for every }v\in V(G).
\]
This completes the induction.

Apply the claim with $j=m_0$, and set
$\mathcal I=\mathcal I_{m_0}$.
Then $|\mathcal I|=m_0$ and $R_i\subseteq W_i$ for 
$i\in\mathcal I$ such that 
\begin{equation}\label{a1}
 |R_i|>\frac{7s}{8}
 \ \text{ for every }i\in\mathcal I
\end{equation}
and
\begin{equation}\label{a2}
 \bigl|\{i\in\mathcal I:v\in R_i\}\bigr|\le\Lambda
\ \text{ for every }v\in V(G).
\end{equation}
Let 
\[
 X_0=\bigcup_{i\in\mathcal I}A_i \
  \text{ and } \
 \mathcal B=\left\{i\in\mathcal I:
             |R_i\cap X_0|>\frac{s}{8}\right\}.
\]
Using \eqref{a2} and
$|X_0|\le m_0\ell$, we obtain
\[
 \begin{aligned}
 \frac{|\mathcal B|s}{8}
 &\le\sum_{i\in\mathcal I}|R_i\cap X_0|
   =\sum_{v\in X_0}\bigl|\{i\in\mathcal I:v\in R_i\}\bigr|\\
 &\le\Lambda|X_0|
 \le3pd\,m_0\ell
 \le p^2dM\ell,
 \end{aligned}
\]
which, together with $s\ge\frac{\eps d\ell}{16}$ and
$p\le\frac{\eps}{6400}$,  gives
$|\mathcal B|\le\frac{pM}{50}$. 
Furthermore, since $pM\ge24$, we have
\[
 m_0\ge\frac{pM}{3}-1\ge\frac{7pM}{24}.
\]
It follows that
\[
 |\mathcal I\setminus\mathcal B|
 \ge\frac{7pM}{24}-\frac{pM}{50}
 =\frac{163pM}{600}>\frac{pM}{4}.
\]
So we can 
choose  a set
$\mathcal I'=\{i_1,\ldots,i_m\}\subseteq
\mathcal I\setminus\mathcal B$, where
$m=\left\lfloor\frac{pM}{4}\right\rfloor$. For each
$j\in[m]$, relabel the triple
$(A_{i_j},W_{i_j},R_{i_j})$ as $(A_j,W_j,R_j)$. Finally, let
\[
 X=\bigcup_{j=1}^mA_j 
\  \text{ and } \  
 Q_j=R_j\setminus X_0 \ \text{ for } j\in[m].
\]

Since $pM\ge24$, we have
\[
 m=\left\lfloor\frac{pM}{4}\right\rfloor \ge\frac{pM}{8}
 \ge\frac{p\eps^2n}{512\ell}
 =\frac{\xi n}{\ell}.
\]
On the other hand, we have
\[
 m=\left\lfloor\frac{pM}{4}\right\rfloor \le\frac{pM}{4}
 \le\frac{p\eps^2n}{128\ell}
 =\frac{4\xi n}{\ell},
\]
so with $p\le\frac{4\zeta}{\eps}$, we have
\[
 |X|\le m\ell\le\frac{p\eps^2n}{128}
 \le\frac{\eps\zeta n}{32}.
\]
This proves property (i).

For every
$j\in[m]$, $Q_j\subseteq W_j\setminus X_0
       \subseteq N_G(A_j)\setminus X$, and 
since $X\subseteq X_0$ and $i_j\notin\mathcal B$,  from \eqref{a1} and the definition of $\mathcal B$, we have
\[ 
 |Q_j|=|R_j\setminus (R_j\cap X_0)|>\frac{7s}{8}-\frac{s}{8}
       =\frac{3s}{4}\ge cd\ell.
\]
This proves property (ii).

It remains to prove property (iii).  Fix a set
$Z\subseteq V(G)$ with $|Z|\le\zeta n$, and define
\[
 \mathcal B_Z=
 \{i\in[m]:|Q_i\setminus Z|<cd\ell\}.
\]
As $s\ge \frac{\eps d\ell}{16}$,
$cd\ell=\frac{\eps d\ell}{32}\le\frac{s}{2}$. Since
$|Q_i|>\frac{3s}{4}$, every $i\in\mathcal B_Z$ satisfies
$|Q_i\cap Z|>\frac{s}{4}$. 
Also, every vertex belongs to at most
$\Lambda\le3pd$ of the sets $Q_1,\ldots,Q_m$. 
Counting these incidences by indices and
then by vertices gives
\[
 \frac{s}{4}|\mathcal B_Z|
 \le\sum_{i\in\mathcal B_Z}|Q_i\cap Z|
 \le3pd|Z|.
\]
As $|Z|\le\zeta n$, $s\ge\frac{\eps d\ell}{16}$  and $m\ge \frac{\xi n}{\ell}=\frac{p\eps^2n}{512\ell}$, we obtain
\[
 \frac{|\mathcal B_Z|}{m}
 \le
 \frac{12pd\zeta n}
      {(\eps d\ell/16)(p\eps^2n/(512\ell))}
 =\frac{98304\zeta}{\eps^3}
 =\frac{3}{128}<\frac{1}{16}.
\]
This proves property (iii).
\end{proof}

The following lemma provides small connected sets meeting
all, or at least three quarters of a family of large vertex sets.

\begin{lemma}\label{lem:scale-hitting}
For every $\alpha\in(0,\frac{1}{2})$, $c\in(0, \frac{1}{32}]$ and
$\xi\in(0,\frac{1}{2048}]$, there is a constant $C=C(\alpha,c)>0$ and an
integer $d_3=d_3(\alpha,c,\xi)\ge3$ such that the following holds.
Let $n,d,\ell$ be positive integers satisfying $d\ge d_3$ and
$2\le\ell\le \frac{n}{2d}$, and let $H$ be a graph on $N$ vertices with
$\frac{n}{2}\le N\le n$, $\Delta(H)\le d$ and $h(H)\ge\alpha d$.
Suppose that $q$ is a positive integer satisfying
$\frac{15\xi n}{16\ell}\le q\le \frac{4\xi n}{\ell}$, and that
$W_1,\ldots,W_q\subseteq V(H)$ satisfy $|W_i|\ge cd\ell$ for
every $i\in[q]$.
\begin{enumerate}[label=\textup{(\alph*)}]
 \item There is a connected set $T\subseteq V(H)$ which
       meets every $W_i$ and satisfies
       $|T|\le \frac{Cn\log d}{d\ell}$.
 \item There is a connected set $T\subseteq V(H)$ which
       meets at least $\frac{3}{4}q$ of the sets $W_1,\ldots,W_q$ and
       satisfies $|T|\le \frac{Cn}{d\ell}$.
\end{enumerate}
%The sets in \textup{(a)} and \textup{(b)} may be different.
\end{lemma}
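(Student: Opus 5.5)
Part (a) follows from Lemma~\ref{lem:connected-hitting} and part (b) from Corollary~\ref{cor:partial-hitting}, both applied directly to $H$ with $s=w=cd\ell$.

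\textbf{Part (a).} We apply Lemma~\ref{lem:connected-hitting} with $N$, $q$, $\alpha$ and $s=cd\ell$. This produces the constant $K=O(\alpha^{-3})$ and the threshold $N_0(\alpha)$. Three hypotheses need checking.
\begin{itemize}
\item $N\ge N_0$. Since $n\ge 2d\ell\ge 4d$, we have $N\ge n/2\ge 2d$. This is at least $N_0$ once $d_3\ge N_0$.
\item $q\le N$. This holds because $q\le 4\xi n/\ell\le n/(512\ell)<N$.
\item $sq\ge 2N$. We have $sq\ge cd\ell\cdot\frac{15\xi n}{16\ell}=\frac{15}{16}c\xi dn$. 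This is at least $2n\ge 2N$ as soon as $d\ge \frac{32}{15c\xi}$, which we build into $d_3$. This is the only place $d_3$ depends on $\xi$.
\end{itemize}
The lemma then gives a connected hitting set with
\[
|T|\le K\frac{N}{cd\ell}\log\frac{qs}{N}.
\]
For the logarithm, $qs\le 4\xi n d$ uses $|W_i|\ge s$ only through the parameter $s$, so we may take $s$ exactly equal to $cd\ell$. Also $N\ge n/2$. Hence
\[
\log\frac{qs}{N}\le \log(8\xi d)\le \log d .
\]
Together with $N\le n$ this yields $|T|\le \frac{K}{c}\cdot\frac{n\log d}{d\ell}$.

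\textbf{Part (b).} We apply Corollary~\ref{cor:partial-hitting} with $w=cd\ell$. We choose the integer $t$ so that
\[
\exp\left(-\frac{\alpha^3 w t}{8N}\right)\le \frac14,
\qquad\text{that is,}\qquad
t\ge \frac{8N\log 4}{\alpha^3 c d\ell}.
\]
Take
\[
t=\left\lceil \frac{8n\log 4}{\alpha^3 c d\ell}\right\rceil .
\]
The corollary gives a connected $T$ meeting at least $\frac34 q$ of the sets, with
\[
|T|\le t+1\le \frac{8\log 4}{\alpha^3 c}\cdot\frac{n}{d\ell}+2 .
\]
The additive constant $2$ is absorbed because $\frac{n}{d\ell}\ge 2$.

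\textbf{Choice of constants.} Set
\[
C=\max\left\{\frac{K}{c},\ \frac{8\log 4}{\alpha^3 c}+1\right\},
\]
which depends only on $\alpha$ and $c$, as required. The main point needing care is the bookkeeping for $d_3$: it must ensure $N\ge N_0$ and $sq\ge 2N$, and the constant inside $\log(qs/N)$ must be bounded by $\log d$ (it is, since $8\xi<1$). None of these steps is a real obstacle.
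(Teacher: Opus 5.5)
Your proposal is correct and follows the paper's proof essentially line by line. It makes the same choices $s=w=cd\ell$, $d_3\ge N_0$ with $\frac{15\xi c d_3}{16}\ge 2$, the same bound $\log\frac{qs}{N}\le\log(8\xi c d)\le\log d$, the same $t$ (with $n$ in place of $N$, which is harmless since $N\le n$), and the same $C=\max\{K/c,\,8\log 4/(\alpha^3c)+1\}$; the only thing you leave implicit is the hypothesis $w=cd\ell\ge1$ of Corollary~\ref{cor:partial-hitting}, which your lower bound on $d_3$ already guarantees.
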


\begin{proof}
%Fix $\alpha,c,\xi$ as in the statement. 
Let $K=K(\alpha)>0$ and
$N_0=N_0(\alpha)\in\N$ be constants for which
Lemma~\ref{lem:connected-hitting} holds with this value of $\alpha$.
%These constants are uniform over all graphs and families of sets satisfying the hypotheses of that lemma. 
Choose $C$ so that
$C\ge \frac{K}{c}$ and $C\ge \frac{8\log4}{\alpha^3c}+1$, and choose an integer
$d_3\ge\max\{3,N_0\}$ so that $\frac{15\xi cd_3}{16}\ge2$. Then $d\ge d_3\ge N_0$. 
Note that $C$ depends only on $\alpha,c$, and $d_3$ depends only on
$\alpha,c,\xi$.

%Now let $n,d,\ell,H,N,q,W_1,\ldots,W_q$ satisfy the hypotheses.
Since $N\ge \frac{n}{2}$, $\ell\ge2$ and $\ell\le \frac{n}{2d}$, we have
$N\ge \frac{n}{2}\ge d\ell\ge 2d\ge N_0$.

Let $s=cd\ell$. Evidently, $s>0$.

Since $\xi\le1/2048$ and $\ell\ge2$, we have $q\le \frac{4\xi n}{\ell}\le \frac{n}{1024}<\frac{n}{2}\le N$. Also, 
%The lower bound on $q$ and the choice of $d_3$ give
%\[
$qs\ge\frac{15\xi n}{16\ell}\cdot cd\ell
     =\frac{15\xi cd}{16}\cdot n
     \ge2n\ge2N$.

Recall that  $\Delta(H)\le d$,  $h(H)\ge\alpha d$, and 
$W_1,\ldots,W_q\subseteq V(H)$ satisfy $|W_i|\ge s$ for
every $i\in[q]$.

Thus hypotheses of Lemma~\ref{lem:connected-hitting} are satisfied.

For part \textup{(a)}, we have by   Lemma~\ref{lem:connected-hitting} that there is  a connected set $T$
meeting every $W_i$ and 
\[
|T|\le K\cdot \frac{N}{s}\log \frac{qs}{N}.
\] 
Using $N\ge n/2$ and $8\xi c\le1$, we obtain
\[
 2\le\frac{qs}{N}
   \le\frac{2qcd\ell}{n}
   \le8\xi cd\le d.
\]
Note that  $N\le n$ and $C\ge \frac{K}{c}$. Then 
\[
 |T|\le\frac{Kn\log d}{cd\ell}
      \le\frac{Cn\log d}{d\ell}.
\]
This proves part (a).
\
For part (b), let
$t=\left\lceil \frac{8N\log4}{\alpha^3cd\ell}\right\rceil$.
Since $qs\ge 2N$ and $q\le N$, we have $s\ge 2$. 
Note that  $N\ge2$ and  $d\ge3$. By 
Corollary~\ref{cor:partial-hitting}, there is a connected set $T$ with $|T|\le t+1$ which meets  at least
\[
 q\left(1-\exp\left(-\frac{\alpha^3cd\ell\,t}{8N}\right)\right)
 \ge q\bigl(1-\exp(-\log4)\bigr)
 =\frac{3q}{4}
\]
of the sets $W_1,\dots, W_q$. Since $N\le n$ and $\frac{n}{d\ell}\ge2$, we have
\[
 |T|\le t+1
      \le\frac{8N\log4}{\alpha^3cd\ell}+2
      \le\left(\frac{8\log4}{\alpha^3c}+1\right)
             \frac{n}{d\ell}
      \le\frac{Cn}{d\ell}.
\]
%The set $T$ is nonempty because it meets at least one of the sets.
This proves part \textup{(b)}.
\end{proof}

\section{Proof of the main theorem}
\label{sec:main-proof}

%We now combine the connected sets from
%Lemma~\ref{lem:stable-branches} with the connected hitting sets from
%Lemma~\ref{lem:scale-hitting} to construct minors. 

The next lemma gives
a complete minor when $d\ell^2\ge n\log d$ and a minor of large
average degree under the weaker condition $d\ell^2\ge n$.

\begin{lemma}\label{lem:scale}
For every $\eps\in(0,1]$, there exist a constant $b=b(\eps)>0$ and an
integer $d_0=d_0(\eps)$ such that the following holds. Let $G$ be a
graph on $n$ vertices, let $d\ge d_0$ be an integer, and suppose that
$\Delta(G)\le d$ and $h(G)\ge\eps d$. If $\ell$ is an integer
satisfying $2\le\ell\le\frac{n}{2d}$, then the following statements hold.
\begin{enumerate}[label=\textup{(\roman*)}]
 \item If $d\ell^2\ge n\log d$, then
       $\ccl(G)\ge\frac{bn}{\ell}$.
 \item If $d\ell^2\ge n$, then $G$ has a minor of average degree at
       least $\frac{bn}{\ell}$.
\end{enumerate}
\end{lemma}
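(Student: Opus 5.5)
We apply Lemma~\ref{lem:stable-branches} to $G$ and obtain pairwise disjoint connected sets $A_1,\ldots,A_m$ with $\frac{\xi n}{\ell}\le m\le\frac{4\xi n}{\ell}$, $|X|\le\frac{\eps\zeta n}{32}$, and sets $Q_i\subseteq N_G(A_i)\setminus X$ with $|Q_i|\ge cd\ell$. The sets $A_i$ will be half of each branch set. The other half will be connected hitting sets $T_i$, built greedily in the part of $G$ that avoids $X$ and all previously built $T_j$. Then $A_i\cup T_i$ will be the $i$-th branch set. It is connected as soon as $T_i$ meets $Q_i$, since $Q_i\subseteq N_G(A_i)$. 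It is adjacent to $A_j\cup T_j$ as soon as $T_i$ meets $Q_j$.

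For (i), we process $i=1,2,\ldots$ in turn and keep a set $Y$ of used vertices, starting with $Y=X$. Each time, we apply Lemma~\ref{lem:remainder} with $Y$ (and the nested second part, so that the $D$'s grow monotonically) to obtain $U$ with $|U|\ge\frac{n}{2}$ and $h(G[U])\ge\frac{\eps d}{4}$. We set $H=G[U]$, $\alpha=\frac{\eps}{4}$, and $Z=V(G)\setminus U$; we need $|Z|\le\zeta n$. By property (iii) of Lemma~\ref{lem:stable-branches}, fewer than $\frac{m}{16}$ indices have $|Q_j\cap U|<cd\ell$. So at least $\frac{15m}{16}\ge\frac{15\xi n}{16\ell}$ of the sets $Q_j\cap U$ are large, and Lemma~\ref{lem:scale-hitting}(a) gives a connected $T_i\subseteq U$ of size at most $\frac{Cn\log d}{d\ell}$ meeting all of them. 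We add $T_i$ to $Y$ and stop after $r=\Theta(\frac{n}{\ell})$ steps (we take $r$ a small constant times $\xi n/\ell$, with $r\le m/16$).

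The main obstacle is the budget. The total size of the $T_i$ is about $r\cdot\frac{Cn\log d}{d\ell}\approx\frac{\xi Cn^2\log d}{d\ell^2}$, which is $O(n)$ only because $d\ell^2\ge n\log d$. To keep this total below $\zeta n$, we choose $r=\lfloor\beta' n/\ell\rfloor$ with $\beta'$ small in terms of $\zeta$ and $C$. Property (iii) of Lemma~\ref{lem:stable-branches} requires $|Z|\le\zeta n$, and $Z$ includes the discarded set $D$, of size at most $\frac{4|Y|}{3\eps}$. So we need $|X|+\sum|T_i|\le\frac{3\eps\zeta n}{4\cdot 5}$ or so, which is why $|X|\le\frac{\eps\zeta n}{32}$ was arranged.

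We now check the clique. Each $T_i$ meets all but fewer than $\frac{m}{16}$ of the $Q_j$. The $T_i$ are disjoint from $X$ and from each other. We want a set $I$ of $r$ indices such that $T_i$ meets $Q_j$ for all $i\in I$ and $j\in I$. Indices are assigned to the $T$'s as follows: a bad set for step $i$ has size less than $\frac{m}{16}$, so the indices bad for some step number fewer than $r\cdot\frac{m}{16}$, which is too many for a direct union bound. Instead we make the assignment inside the greedy process. At step $i$, we pick an index $a_i$ that is good at every earlier step and also good at the current step, and we then pair $T_i$ with $A_{a_i}$. Goodness at later steps is not known yet, though. We fix this by letting $T_i$ hit all currently good $Q_j$, and by arguing that the set of good indices only shrinks: $Z$ grows monotonically, thanks to the nested clause of Lemma~\ref{lem:remainder}. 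So we finally take $I$ to be the set of indices good at the last step, which is still at least $\frac{15m}{16}$ by (iii) applied to the final $Z$. Every $T_i$ meets every $Q_j$ with $j\in I$, since such $j$ were good at step $i$. We pair $T_1,\ldots,T_r$ with distinct indices $a_1,\ldots,a_r\in I$, which is possible since $r\le|I|$. The branch sets $A_{a_i}\cup T_i$ are then connected and pairwise adjacent, giving $\ccl(G)\ge r\ge\frac{bn}{\ell}$.

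For (ii), we run the same process with Lemma~\ref{lem:scale-hitting}(b) in place of (a). Now $|T_i|\le\frac{Cn}{d\ell}$, so the budget $r\frac{Cn}{d\ell}\le\zeta n/\text{const}$ holds using only $d\ell^2\ge n$. Each $T_i$ meets at least $\frac34$ of the currently good $Q_j$, and hence at least $\frac34\cdot\frac{15m}{16}-$ (those that later turn bad) of the indices in $I$. We form the branch sets $A_{a_i}\cup T_i$ for $i\le r$, and choose the remaining vertices of $I$ not to be involved, since then some $A_{a_i}\cup T_i$ is disconnected. Connectivity fails if $T_i$ misses $Q_{a_i}$, so we choose $a_i$ among the indices in $I$ met by $T_i$. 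Such an assignment exists by Hall's theorem or greedily, since each $T_i$ meets more than half of $I$ and $r\le|I|/4$. The minor on these $r$ branch sets has, for each $i$, edges to every $j$ with $T_i$ meeting $Q_{a_j}$. A double count shows that on average a constant fraction of pairs is adjacent: each $T_i$ meets at least a $\frac12$ fraction of $I$, and a random choice (averaging over assignments) transfers this to the chosen $a_j$'s. This gives a minor with $r=\Theta(n/\ell)$ vertices and average degree $\Omega(r)$, as required. The delicate points are the monotonicity of goodness, which is used in (i), and the averaging over assignments in (ii).
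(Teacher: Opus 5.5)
Your part (i) is essentially the paper's argument. The paper also builds $T_1,\dots,T_k$ greedily inside nested sets $U_0\supseteq U_1\supseteq\cdots$ obtained from Lemma~\ref{lem:remainder}. It applies property (iii) of Lemma~\ref{lem:stable-branches} at each stage, with $Z=D_j\cup F_j$ rather than your $V(G)\setminus U$; this makes no difference, since $Q_i\cap X=\emptyset$ and the budget still fits. It then pairs the $T_j$ with distinct indices still active at the end, using monotonicity exactly as you do.

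Part (ii) is where you take a different route. You merge each $T_i$ with some $A_{a_i}$, which forces $T_i$ to meet $Q_{a_i}$ and then requires an averaging over assignments. The paper never merges in (ii). It keeps $A_1,\dots,A_m$ and $T_1,\dots,T_k$ as separate branch sets of a bipartite minor $J$. Each $T_j$ meets $Q_i$ for at least $\frac34\cdot\frac{15m}{16}$ indices, so $|E(J)|\ge\frac{45km}{64}$. Since $m+k\le 2m$ and $k\ge\eta m/2$, this gives $\overline d(J)\ge\frac{45k}{64}=\Omega(n/\ell)$ directly. There is no assignment, no randomness, and no need for the final active set $I$.

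Your route does work, but the averaging step is too quick as written. The index $a_j$ is confined to the set $M_j$ of indices met by $T_j$, so it is not uniform on $I$. What you actually need is that $|M_i\cap M_j\cap I|$ is a constant fraction of $|I|$. This holds: $|M_i\cap I|\ge\frac{45m}{64}-\frac{m}{16}\ge\frac{41}{64}|I|$, hence $|M_i\cap M_j\cap I|\ge\frac{18}{64}|I|$. Now choose $a_j$ uniformly from $(M_j\cap I)\setminus\{a_1,\dots,a_{j-1}\}$, with $r\le|I|/16$. Then each ordered pair $(i,j)$ with $i\ne j$ is adjacent with probability at least $\frac{18}{64}-\frac1{16}=\frac{14}{64}$, so some assignment yields average degree at least $\frac{14}{64}(r-1)$.

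Your approach produces a minor on $r$ vertices with edge density bounded below. The paper's is a bipartite minor on $m+k$ vertices. Both have average degree $\Theta(n/\ell)$, which is all that the later application of Theorem~\ref{thm:KT} needs, but the paper's version costs one line.
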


\begin{proof}
Let $c,\xi,\zeta$ and $d_2$ be given in
Lemma~\ref{lem:stable-branches}.  In particular, $c\le\frac{1}{32}$,
$\xi\le\frac{1}{2048}$ and $\zeta\le\eps$. Let $\alpha=\eps/4$.
Then let $C,d_3$ be given in  Lemma~\ref{lem:scale-hitting}.

Choose $\eta>0$ such that
\[
 \eta\le\frac{1}{16}
 \ \text{ and }\ 
 4\eta\xi C\le\frac{\eps\zeta}{32}.
\]
Choose  an integer $d_0\ge \max\{d_2, d_3\}$ such that  $\eta\xi d_0\ge1$. 
All these choices depend only on $\eps$. Note that $d\ge d_0$.

By Lemma~\ref{lem:stable-branches}, for some integer $m\ge 1$, there are 
pairwise disjoint nonempty connected sets $A_1,\ldots,A_m$ and
sets $Q_1,\ldots,Q_m$ such that 
\[
 \frac{\xi n}{\ell}\le m\le\frac{4\xi n}{\ell},
 \ |X|\le\frac{\eps\zeta n}{32}
 \ \text{ and } \  Q_i\subseteq N_G(A_i)\setminus X
 \text{ for } i\in[m]
\]
with 
\[
 X=\bigcup_{i=1}^mA_i.
\]
Let $k=\lfloor\eta m\rfloor$. Since $m\ge\xi n/\ell$,
$n/\ell\ge2d$ and $\eta\xi d_0\ge1$, we have
$\eta m\ge2\eta\xi d\ge2$, so
\[
1\le\frac{\eta m}{2}\le k\le\eta m\le\frac m{16}.
\]

For a set $U\subseteq V(G)\setminus X$, call an index $i\in[m]$
\emph{$U$-active} if $|Q_i\cap U|\ge cd\ell$.

\medskip
\noindent\textbf{Claim.}
For each of parts (i) and (ii), under the
corresponding hypothesis, there exist pairwise disjoint connected sets
$T_j$, $j\in[k]$, all disjoint from $X$, and sets
$D_0,\ldots,D_k,U_0,\ldots,U_k$ (these sets
may differ between part (i) and part (ii)) with the following properties. 
For $0\le j\le k$, let
\[
 F_0=\emptyset
 \text{ and }
 F_j=T_1\mathbin{\dot\cup}\cdots\mathbin{\dot\cup}T_j
 \text{ for } j\ge 1.
\]
For every $j$ with $0\le j\le k$,
\begin{equation}\label{eq:scale-recursive-partition}
 V(G)=(X\cup F_j)\mathbin{\dot\cup}D_j
       \mathbin{\dot\cup}U_j,
\end{equation}
\[
 e_G(D_j,U_j)\le\alpha d|D_j|,
 \ 
 |D_j|\le\frac{4|X\cup F_j|}{3\eps},
 \ 
 |U_j|\ge\frac{n}{2}, \
 h(G[U_j])\ge\alpha d,
\]
and at least $\frac{15m}{16}$ indices are $U_j$-active.
For every $j\in [k]$,
\[
 T_j\subseteq U_{j-1},
 \ 
 D_{j-1}\subseteq D_j,
 \ 
 U_j\subseteq U_{j-1}\setminus T_j \
 \text{ and }\ 
 |T_j|\le C\ell.
\]
$T_j$ meets $Q_i\cap U_{j-1}$
for every $U_{j-1}$-active index $i$ in part (i), 
and 
$T_j$ meets $Q_i\cap U_{j-1}$ for at least $\frac34$ of
the $U_{j-1}$-active indices $i$ in part (ii).

\begin{proof}
We construct the desired sets recursively.
For the initial stage, let $F_0=\emptyset$. Recall that
$|X|\le\frac{\eps\zeta n}{32}\le\frac{\eps^2n}{16}$.
So we can apply 
Lemma~\ref{lem:remainder} to the partition
\[
 V(G)=X\mathbin{\dot\cup}\emptyset
       \mathbin{\dot\cup}(V(G)\setminus X)
\]
to obtain the new partition $V(G)=X\mathbin{\dot\cup}D_0
       \mathbin{\dot\cup}U_0$ such that
\[
 e_G(D_0,U_0)\le\alpha d|D_0|,
 \ 
 |D_0|\le\frac{4|X\cup F_0|}{3\eps}, \ 
 |U_0|\ge\frac{n}{2} \ 
 \text{ and } \
 h(G[U_0])\ge\alpha d.
\]
As $|X|\le\frac{\eps\zeta n}{32}$, we have
\[
 |D_0|\le\frac{4|X|}{3\eps}
 \le\frac{\zeta n}{24}<\zeta n.
\]
By Lemma \ref{lem:stable-branches} (ii), $Q_i\cap X=\emptyset$ for every $i\in[m]$. So, from $V(G)=X\mathbin{\dot\cup}D_0
       \mathbin{\dot\cup}U_0$, we have
\[
 Q_i\setminus D_0=Q_i\cap U_0
\ \text{ for }i\in[m].
\]
Now Lemma~\ref{lem:stable-branches} (iii) applied
with $Z=D_0$  shows that at least $\frac{15m}{16}$ indices are
$U_0$-active. Thus the claim holds for $j=0$.

Now suppose that the claim holds  for $0\le j<k$. Let $q_j$ be the number of $U_j$-active indices. By induction hypothesis and the upper bound on $m$, we have
\[
 \frac{15\xi n}{16\ell}
 \le\frac{15m}{16}
 \le q_j\le m\le\frac{4\xi n}{\ell}.
\]
Furthermore, $\frac{n}{2}\le |U_j|\le n$, $\Delta(G[U_j])\le d$ and
$h(G[U_j])\ge\alpha d$. Let $W_1,\ldots,W_{q_j}$ be the sets $Q_i\cap U_j$
for $i\in[m]$ such that $i$ is $U_j$-active. Then $|W_i|\ge cd\ell$ for each $i\in [q_j]$. 
Note that $\alpha\in(0,1/2)$, $d\ge d_0\ge d_3$ and $2\le\ell\le n/(2d)$. 
So all the hypotheses of Lemma~\ref{lem:scale-hitting} for
$H=G[U_j]$ are satisfied with $q=q_j$.

In part (i), Lemma \ref{lem:scale-hitting} (a) ensures that there is a connected set $T_{j+1}\subseteq U_j$ meeting $Q_i\cap U_j$ for every $U_j$-active index $i$. Since $d\ell^2\ge n\log d$, we have 
$|T_{j+1}|\le \frac {Cn\log d}{d\ell}\le C\ell$; 
In part (ii), Lemma \ref{lem:scale-hitting} (b) ensures that there is a connected set $T_{j+1}$ meeting $Q_i\cap U_j$ for at least $\frac{3}{4}$ of the
$U_j$-active indices. Since $d\ell^2\ge n$, we have 
$|T_{j+1}|\le\frac{Cn}{d\ell}\le C\ell$.
Thus $T_{j+1}$ has the required properties in either part.

Since $T_{j+1}\subseteq U_j$, we have from 
\eqref{eq:scale-recursive-partition} that  the set
$T_{j+1}$ is disjoint from $X\cup F_j\cup D_j$, so 
$T_1,\ldots,T_{j+1}$ are pairwise disjoint and are disjoint from
$X$. Define
\[
 F_{j+1}=F_j\mathbin{\dot\cup}T_{j+1}.
\]
Since every set $T_r$, $1\le r\le j+1$, was supplied by Lemma \ref{lem:scale-hitting}, we have $|T_r|\le C\ell$,
so
\begin{equation}\label{eq:scale-F-bound}
 \begin{aligned}
 |F_{j+1}|
 \le (j+1)C\ell
 \le kC\ell
 \le\eta mC\ell
 \le4\eta\xi Cn
 \le\frac{\eps\zeta n}{32},
 \end{aligned}
\end{equation}
so
\begin{equation}\label{MM}
 |X\cup F_{j+1}|\le |X|+|F_{j+1}|
 \le\frac{\eps\zeta n}{16}
 \le\frac{\eps^2n}{16}.
\end{equation}
Recall that $\zeta\le\eps\le1$. From the induction hypothesis and the fact that 
$X\cup F_j\subseteq X\cup F_{j+1}$, we have
\[
 |D_j|
 \le\frac{4|X\cup F_j|}{3\eps}
 \le\frac{4|X\cup F_{j+1}|}{3\eps}
 \le\frac{\zeta n}{12}<\frac{n}{2}.
\]
Moreover,
\[
 e_G(D_j,U_j\setminus T_{j+1})
 \le e_G(D_j,U_j)
 \le\alpha d|D_j|.
\]
Thus Lemma~\ref{lem:remainder} applies to the partition
\[
 V(G)=(X\cup F_{j+1})\mathbin{\dot\cup}D_j
       \mathbin{\dot\cup}(U_j\setminus T_{j+1})
\]
to form the new partition 
\[
 V(G)=(X\cup F_{j+1})\mathbin{\dot\cup}D_{j+1}
       \mathbin{\dot\cup}U_{j+1}
\]
with 
$ D_j\subseteq D_{j+1}$,  
$U_{j+1}\subseteq U_j\setminus T_{j+1}$,
\[
 e_G(D_{j+1},U_{j+1})\le\alpha d|D_{j+1}|,
 \ 
 |D_{j+1}|\le\frac{4|X\cup F_{j+1}|}{3\eps},
\  
 |U_{j+1}|\ge\frac{n}{2}
 \  \text{ and }\ 
 h(G[U_{j+1}])\ge\alpha d.
\]
%It remains only to verify that sufficiently many indices are
%$U_{j+1}$-active. 
From $|D_{j+1}|\le\frac{4|X\cup F_{j+1}|}{3\eps}$ and \eqref{MM}, we have 
\[
 |D_{j+1}|\le\frac{\zeta n}{12},
\]
which, together with
\eqref{eq:scale-F-bound} gives
 \[
 |D_{j+1}\cup F_{j+1}|
 \le\frac{\zeta n}{12}+\frac{\eps\zeta n}{32}
 <\zeta n.
\]
Since $Q_i\cap X=\emptyset$ and $V(G)=(X\cup F_{j+1})\mathbin{\dot\cup}D_{j+1}
       \mathbin{\dot\cup}U_{j+1}$,
\[
 Q_i\setminus(D_{j+1}\cup F_{j+1})=Q_i\cap U_{j+1}
\ \text{ for every }i\in[m].
\]
Now property (iii) of Lemma~\ref{lem:stable-branches} applied
with $Z=D_{j+1}\cup F_{j+1}$ shows that at least
$\frac{15m}{16}$ indices are $U_{j+1}$-active. This completes the
inductive step and hence the construction through stage $k$.
\end{proof} 
%This proves Claim~2. %\hfill$\square$

Set $b=\frac{45\eta\xi}{128}$, which depends only on $\eps$.
We now prove parts (i) and (ii) with this choice of $b$.

%\medskip
%\noindent\emph{Proof of part (i).}
Suppose first that $d\ell^2\ge n\log d$.  We use the sets constructed
in the  Claim for part (i). At least $\frac{15m}{16}$ indices are
$U_k$-active.  Since  $k\le\frac{m}{16}$, we can choose distinct
$U_k$-active indices $i_1,\ldots,i_k$.

Fix $j,r\in[k]$. Since $U_k\subseteq U_{j-1}$, we have
\[
 |Q_{i_r}\cap U_{j-1}|
 \ge |Q_{i_r}\cap U_k|
 \ge cd\ell.
\]
Thus $i_r$ is $U_{j-1}$-active. By the 
Claim (for part (i)), $T_j$ meets $Q_{i_r}\cap U_{j-1}$.
As $Q_{i_r}\subseteq N_G(A_{i_r})$, there is an edge between
$T_j$ and $A_{i_r}$.

For each $j\in[k]$, let $B_j=A_{i_j}\cup T_j$.
The sets $B_1,\ldots,B_k$ are pairwise disjoint, because the
sets $A_i$ are pairwise disjoint, the sets $T_j$ are pairwise
disjoint, and every $T_j$ is disjoint from $X$.
Since $A_{i_j}$ and $T_j$ are connected
and there is an edge between them, we deduce that each $B_j$ is connected. Moreover, for distinct
$j,r\in[k]$, the edge between $T_j$ and $A_{i_r}$ joins $B_j$
to $B_r$. Hence $(B_j)_{j\in[k]}$ is a $K_k$-minor model in $G$.
Using $m\ge\frac{\xi n}{\ell}$,
we obtain
\[
 \ccl(G)\ge k
 \ge\frac{\eta m}{2}
 \ge\frac{\eta\xi n}{2\ell}
 \ge\frac{bn}{\ell},
\]
where the last inequality follows from $\frac{45}{128}<\frac12$.
This proves part (i).

%\medskip
%\noindent\emph{Proof of part (ii).}
Suppose next that $d\ell^2\ge n$, and use the sets constructed in the
Claim for part (ii). Define a bipartite graph $J$ with parts
$\{A_1,\ldots,A_m\}$ and $\{T_1,\ldots,T_k\}$, joining $A_i$
to $T_j$ precisely when $e_G(A_i,T_j)>0$.
All these vertex sets are pairwise disjoint and connected, so
they form a $J$-minor model in $G$. Thus $J\prec G$.

For each $j\in[k]$, at least $\frac{15m}{16}$ indices are
$U_{j-1}$-active. The meeting property in the Claim for part (ii)
therefore ensures that $T_j$ meets $Q_i\cap U_{j-1}$ for at least
$\frac34\cdot\frac{15m}{16}=\frac{45m}{64}$ distinct indices $i$.
Each such intersection gives an edge between $T_j$ and $A_i$,
since $Q_i\subseteq N_G(A_i)$. Consequently,
\[
 |E(J)|=\sum_{j=1}^k\deg_J(T_j)
 \ge\frac{45km}{64}.
\]
Since $|V(J)|=m+k\le2m$, $k\ge \eta \frac{m}{2}$ and $m\ge \frac{\xi n}{\ell}$, we have
\[
 \overline d(J)
 =\frac{2|E(J)|}{m+k}
 \ge\frac{45k}{64}
 \ge\frac{45\eta m}{128}
 \ge\frac{45\eta\xi n}{128\ell}
 =\frac{bn}{\ell}.
\]
This proves part (ii).
\end{proof}

We are now ready to prove Theorem~\ref{thm:main} by combining Lemma~\ref{lem:scale} with Theorems~\ref{thm:KN-basic} and~\ref{thm:KT}.

\begin{proof}[Proof of Theorem~\ref{thm:main}] It is trivial if 
$\eps>1$. Assume that
$0<\eps\le1$.

Let $b$ and $d_0$ be supplied by Lemma~\ref{lem:scale}. Decrease $b$
if necessary so that $b\le1$, and increase $d_0$ so that
$\eps d_0\ge3$ and $bd_0\ge3$. By
Theorem~\ref{thm:KN-basic}, there are constants
$b_{\mathrm{KN}}=b_{\mathrm{KN}}(\eps)>0$ and
$n_{\mathrm{KN}}=n_{\mathrm{KN}}(\eps)$ such that
$\ccl(G)\ge b_{\mathrm{KN}}\sqrt n$ whenever $n\ge n_{\mathrm{KN}}$.
Choose $n_0=\max\{n_{\mathrm{KN}},d_0\}$.
This explicitly gives all order requirements in the proof.

\noindent
{\bf Case 1.} 
 $3\le d<d_0$. 

Since
$\frac{d}{\log d}\le\frac{d_0}{\log 3}$,
Theorem~\ref{thm:KN-basic} gives
\[
 \ccl(G)\ge b_{\mathrm{KN}}\sqrt{\frac{\log 3}{d_0}}\,
                 \sqrt{\frac{nd}{\log d}}.
\]

\noindent
{\bf Case 2.} $d\ge d_0$. 

Let
\[
 \ell_1=\max\left\{2,
        \left\lceil\sqrt{\frac{n\log d}{d}}\right\rceil\right\}
 \ \text{ and }\ 
 \ell_2=\max\left\{2,
        \left\lceil\sqrt{\frac{n}{d}}\right\rceil\right\}.
\]

\noindent
{\bf Case 2.1.} $\ell_1\le\frac{n}{2d}$. 

Note that
$2\le\ell_1\le\frac{n}{2d}$ and $d\ell_1^2\ge n\log d$.
Furthermore, the inequality $\ell_1\le\frac{n}{2d}$ implies
$\frac{n}{d}\ge4$, so
$\sqrt{\frac{n\log d}{d}}>1$ and
$\ell_1\le2\sqrt{\frac{n\log d}{d}}$. Thus, 
Lemma~\ref{lem:scale} (i) gives
\[
 \ccl(G)\ge\frac{bn}{\ell_1}
 \ge\frac{b}{2}\sqrt{\frac{nd}{\log d}}.
\]

\noindent
{\bf Case 2.2.}  $\ell_1>\frac{n}{2d}$ but
$\ell_2\le\frac{n}{2d}$. 

We again have $\frac{n}{d}\ge4$.
Since
$\left\lceil\sqrt{\frac{n\log d}{d}}\right\rceil
>\frac{n}{2d}$, it follows that
\[
 \sqrt{\frac{n\log d}{d}}>\frac{n}{2d}-1
 \ge\frac{n}{4d},
\]
and hence $\frac{n}{d}<16\log d$. 
Lemma~\ref{lem:scale} (ii) applied with $\ell=\ell_2$
gives a minor $H\prec G$ with average degree at least
\[
 r_0=\frac{bn}{\ell_2}\ge\frac{b}{2}\sqrt{nd}\ge bd\ge3.
\]
Here we used $\ell_2\le2\sqrt{\frac{n}{d}}$ and
$\frac{n}{d}\ge4$. Since $b\le1$ and
$\ell_2\ge\sqrt{\frac{n}{d}}$, we also have
$r_0\le\sqrt{nd}$. The elementary inequality
$16\log d\le d^4$ for $d\ge3$ gives
\[
 \log r_0\le\log\sqrt{nd}
 =\log d+\frac{1}{2}\log \frac{n}{d} <3\log d.
\]
Theorem~\ref{thm:KT}, applied to $H$ with the lower bound $r_0$
yields
\[
 \ccl(G)\ge\frac{c_{\mathrm{KT}}b}{2\sqrt{3}}
             \sqrt{\frac{nd}{\log d}}.
\]

\noindent
{\bf Case 2.3.}  $\ell_2>\frac{n}{2d}$. 

If
$\frac{n}{d}<4$, then trivially $\frac{n}{d}<16$. If
$\frac{n}{d}\ge4$, then
\[
 \sqrt{\frac{n}{d}}>\frac{n}{2d}-1\ge\frac{n}{4d},
\]
and again $\frac{n}{d}<16$. Thus $d>\frac{n}{16}$. The singleton
case of the Cheeger
condition gives average degree at least $\eps d\ge3$. Since
$\log(\eps d)\le\log d$ and $d>\frac{\sqrt{nd}}{4}$,
Theorem~\ref{thm:KT} gives
\[
 \ccl(G)\ge\frac{c_{\mathrm{KT}}\eps}{4}
             \sqrt{\frac{nd}{\log d}}.
\]

Combining all cases above, we  complete the proof by setting
\[
 \beta=\min\left\{
 b_{\mathrm{KN}}\sqrt{\frac{\log 3}{d_0}},\
 \frac{b}{2},\
 \frac{c_{\mathrm{KT}}b}{2\sqrt{3}},\
 \frac{c_{\mathrm{KT}}\eps}{4}
 \right\}. \qedhere
\]
\end{proof}

\bigskip

\noindent {\bf Acknowledgement.}
This work was supported by the National Natural Science Foundation of China (No.~12571364).

\bigskip

\noindent\textbf{Declaration of competing interest}

\noindent There is no competing interest.

\medskip
\noindent\textbf{Data availability}

\noindent There is no data associated with this paper.

\end{document}